\numberwithin{equation}{section}
\DeclareMathOperator{\im}{Im}
\DeclareMathOperator{\LM}{LM}
\DeclareMathOperator{\LC}{LC}
\DeclareMathOperator{\LT}{LT}
\DeclareMathOperator{\LW}{Top}
\DeclareMathOperator{\spec}{Spec}
\newcommand{\LS}[1][\mathit{F}]{\mathrm{LSy(#1)}}
\newcommand{\LSL}[1][\mathit{F}]{\mathrm{LSyL(#1)}}
\newcommand{\LImS}[1][\mathit{F}^{(t)}_{\omega}]{\mathrm{LImSy(#1)}}
\newcommand{\Gobs}[1][\mathit{F}]{\mathrm{Gobs(#1)}}
\def\lcm{\mathrm{lcm}}
\def\Syz{\mathrm{Syz}}
\def\Spoly{\mathrm{Spoly}}
\def\A{\mathbb{A}}
\DeclareMathOperator{\ini}{in}
\newcommand{\gr}{Gr\"obner}
\theoremstyle{plain}
\newtheorem{theorem}{Theorem}[section]
\newtheorem{proposition}[theorem]{Proposition}
\newtheorem{corollary}[theorem]{Corollary}
\newtheorem{lemma}[theorem]{Lemma}
\theoremstyle{definition}
\newtheorem{definition}[theorem]{Definition}
\newtheorem{example}[theorem]{Example}
\newtheorem{remark}[theorem]{Remark}
\newtheorem*{acknowledgments}{Acknowledgments}
\newtheorem*{question}{Question}
\title[Analysis of computing GB and GD]{Analysis of computing Gr\"obner bases and Gr\"obner degenerations via theory of signatures}
\author{Yuta Kambe}
\date{}
\subjclass[2020]{13P10}
\keywords{\gr\ basis, \gr\ degeneration, signature based algorithm}
\address{Mitsubishi Electric Corporation, Information Technology R\&D Center, 5-1-1, Ofuna, Kamakura City, 247-8501, Japan}
\email{Kambe.Yuta@bx.MitsubishiElectric.co.jp}
\begin{document}

\maketitle

\begin{abstract}
The signatures of polynomials were originally introduced by Faug\`{e}re for the efficient computation of \gr\ bases \cite{Fau02}, and redefined by Arri-Perry \cite{AP11} as the standard monomials modulo the module of syzygies. Since it is difficult to determine signatures, Vaccon-Yokoyama \cite{VY17} introduced an alternative object called guessed signatures. In this paper, we consider a module $\Gobs{}$ for a tuple of polynomials $F$ to analyse computation of \gr\ bases via theory of signatures. This is the residue module $\ini_{\prec}(\Syz(\LM(F)))/\ini_{\prec}(\Syz(F))$ defined by the initial modules of the syzygy modules with respect to the Schreyer order. We first show that $F$ is a \gr\ basis if and only if $\Gobs{}$ is the zero module. Then we show that any homogeneous \gr\ basis with respect to a graded term order satisfying a common condition must contain the remainder of a reduction of an S-polynomial. We give computational examples of transitions of minimal free resolutions of $\Gobs{}$ in a signature based algorithm. Finally, we show a connection between the module $\Gobs{}$ and \gr\ degenerations.
\end{abstract}

\tableofcontents

\section{Introduction}

The history of computing \gr\ bases began with the Buchberger's algorithm, which selects polynomials by running a multivariate division algorithm and adding them to the set of generators until it satisfies the Buchberger's criterion \cite{Buch65}. The ideas of the Buchberger's algorithm are still the basis of \gr\ basis computation algorithms, and most algorithms gradually approximate the input polynomial system to a \gr\ basis by iteratively computing the S-polynomials generated by the cancellations of the leading terms. A practical problem with this method is that the artifacts produced by the procedure are unpredictable for the choice of generators, term order, and so on. This implies a computational difficulty in applications of the \gr\ basis theory.

Our motivations in this paper are:
\begin{itemize}
	\item to obtain a quantitative cost function of a tuple of polynomials $F$ that predicts the complexity of the computation of a \gr\ basis from $F$,
	\item to answer the question of whether the S-polynomial computation is always necessary to determine a \gr\ basis, and
	\item to represent the computation of \gr\ bases in the geometrical context,
\end{itemize}
for the construction of new efficient algorithms intrinsically different from Buchberger's algorithm, such as Newton's method, midpoint method and so on, in the future. To realize it, we give an algebraic or geometric analysis of the syzygies of $F$ in the computational aspects via the theory of the signatures. Then we obtain an object $\Gobs{}$ that corresponds to the computation of a \gr\ basis from $F$ and a \gr\ degeneration of $F$. And we prove that remainders of divisions of S-polynomials must be determined to obtain homogeneous \gr\ bases with respect to graded orders.

Let $R = K [x_1,\ldots, x_n]$ be the polynomial ring  with a term order $<$ over a field $K$, $F = (f_1,f_2,\ldots,f_m)$ a tuple of elements in $R$, and $I$ the ideal generated by $F$. By $R^m = \oplus_{i=1}^m Re_i$ we denote the free $R$-module with the basis $(e_1,e_2,\ldots,e_m)$ corresponding to $F$. Assume that $R^m$ equips a term order $\prec$. The signature $S(f)$ of a non-zero element $f$ in $I$ is defined as
\[ S(f) = \min_{\prec} \{ \LM(u) \mid u \in R^m,\ \bar{u} = f\}, \]
where $\bar{u}$ is the image of $u$ under the canonical surjection $R^m \rightarrow I \rightarrow 0$ (see also Definition \ref{def.signature}, Proposition \ref{prop.LMSyz_defines_signatures}). Faug\`{e}re first introduced the concept of signatures in his $F_5$ algorithm for efficient computation of \gr\ bases by avoiding reductions to zero \cite{Fau02}. Several researchers proposed many variants of the $F_5$ algorithm, nowadays called signature based algorithms. Arri-Perry introduced another definition of the signatures to give a proof of the termination and correctness of the $F_5$ algorithm or signature based algorithms for any input \cite{AP11}. It is difficult to determine the signature for a general polynomial without a \gr\ basis of $I$ or the syzygy module $\Syz(F)$. Vaccon-Yokoyama defined the ``guessed'' signatures of the S-polynomials as an alternative object of signatures \cite{VY17}. The guessed signatures are only determined from the computational history of the running instance. Then they made a simple implementation of a signature based algorithm. In this paper we introduce a definition of guessed signatures that is different from \cite{VY17}. We define the guessed signatures for pairs $(x^{\alpha}e_i, x^{\beta}e_j)$ of monomials in $R^m$ such that $x^{\alpha}\LM(f_i) = x^{\beta} \LM(f_j)\ (i<j)$ as the monomials $x^{\beta}e_j$ in the second components (Definition \ref{def.guessed_signatures}).

If we attach the Schreyer order on $R^m$ (Definition \ref{def.Schreyer_order}), the guessed signature of a pair $(x^{\alpha}e_i, x^{\beta}e_j)$ is the leading monomial of $x^{\alpha}e_i - x^{\beta} e_j$. In fact, the guessed signature of a pair $(x^{\alpha}e_i, x^{\beta}e_j)$ is not always the signature of the S-polynomial $\frac{1}{\LC(f_i)}x^{\alpha}f_i - \frac{1}{\LC(f_j)}x^{\beta}f_j$. It partly depends on whether the reduction of the S-polynomial is zero or not. From this point of view, in this paper we suppose that the difference between the set of guessed signatures and the set of signatures might predict the behavior to computations of \gr\ bases from $F$, and then we focus on this difference. From the Schreyer's theorem, the set of guessed signatures is the set of the leading monomials $\LM(\Syz(\LM(F)))$ of the syzygy module of the tuple $\LM(F) = (\LM(f_1),\ldots, \LM(f_m))$ \cite[Theorem 15.10]{Eis95}. Then our main target is the residue module
\[ \Gobs{} = \ini_{\prec}(\Syz(\LM(F)))/\ini_{\prec}(\Syz(F)). \]

From now on we always attach the Schreyer order on $R^m$. Our contributions in this paper are the following.
\begin{itemize}
	\item[(A)] We give a criterion for \gr\ bases: $F$ is a \gr\ basis if and only if $\Gobs{} = 0$ (Theorem \ref{thm.criterion_by_signature}).
	\item[(B)] We show that for any homogeneous \gr\ basis $G$ of $I$ including $F$ with respect to a graded term order $<$, $G$ contains an element $g$ such that $\LM(g) = \LM(r)$, where $r$ is the remainder of a reduction of an S-polynomial. If $G$ satisfies some common condition, then $g = cr\ (\exists\, c \in K)$ (Corollary \ref{cor.S-poly_is_needed}).
	\item[(C)] We give examples of transitions of $\Gobs{}$ in a signature based algorithm (Section \ref{sec.example}).
	\item[(D)] We find a closed subscheme $X$ in $\spec R \times_K \mathbb{A}_K^1$ and direct summand $N(F)$ of $\Gobs{}$ such that $X$ is a flat deformation of $\spec R/I$ to $\spec R/\langle \LM(F) \rangle$ over $\mathbb{A}_K^1$ if and only if $N(F) = 0$ (Theorem \ref{thm.criterion_by_the_image_of_SyzFt}, Lemma \ref{lem.LSF_subset_LImS}).
\end{itemize}

For (A), a key lemma is the following (see also Lemma \ref{lem.Signature_differ_LM}).
\begin{lemma}\label{lem.intro_signature_differ_LM}
For any element $f$ in $I$, the condition
\[ \LM(f) \not\in \langle \LM(f_1),\ldots \LM(f_m) \rangle\]
implies that
\[ S(f) \in \LM(\Syz(\LM(F))) \setminus \LM(\Syz(F)). \] 
\end{lemma}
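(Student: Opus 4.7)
The plan is to verify the two containments $S(f) \notin \LM(\Syz(F))$ and $S(f) \in \LM(\Syz(\LM(F)))$ separately. For the first, I would argue by contradiction: pick $u^* \in R^m$ with $\bar{u^*} = f$ and $\LM(u^*) = S(f)$. If there were a nonzero $s \in \Syz(F)$ with $\LM(s) = S(f)$, then $u' := u^* - (\LC(u^*)/\LC(s))\,s$ would still map to $f$ while having $\LM(u') \prec S(f)$, contradicting the minimality built into the definition of $S(f)$. Note that this direction does not use the hypothesis on $\LM(f)$; the hypothesis is only needed for the membership in $\LM(\Syz(\LM(F)))$.

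For the second containment, write $u^* = \sum_{j} g_j e_j$ with $\LM(u^*) = x^{\alpha} e_{i_0}$ and set $T := x^{\alpha} \LM(f_{i_0})$. Every other $x^{\beta} e_j \in \supp(u^*)$ is $\prec x^{\alpha} e_{i_0}$, so by the definition of the Schreyer order $x^{\beta} \LM(f_j) \leq T$, with equality forcing $j < i_0$. The hypothesis $\LM(f) \notin \langle \LM(f_1),\dots,\LM(f_m)\rangle$ rules out $\LM(f) = T$, so the coefficient of $T$ in $\bar{u^*} = \sum_j g_j f_j$ must vanish. Since the pair $(\alpha, i_0)$ alone contributes $\LC(g_{i_0})\LC(f_{i_0}) \neq 0$, a cancelling contribution must come from some $(\beta, j) \in \supp(u^*)$ with $j < i_0$ and $x^{\beta} \LM(f_j) = T$. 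Setting $L := \lcm(\LM(f_j), \LM(f_{i_0}))$ and $h := T/L$, the Schreyer-type syzygy $\sigma := (L/\LM(f_j))\,e_j - (L/\LM(f_{i_0}))\,e_{i_0}$ lies in $\Syz(\LM(F))$, and its leading monomial is $(L/\LM(f_{i_0}))\,e_{i_0}$ (the larger index wins the tie). Hence $\LM(h\sigma) = x^{\alpha} e_{i_0} = S(f) \in \LM(\Syz(\LM(F)))$.

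The delicate bookkeeping is the direction of the Schreyer tie-break fixed by Definition~\ref{def.guessed_signatures}: it is exactly this convention that forces the competing pair to have $j < i_0$, which in turn makes the $e_{i_0}$-summand the leading monomial of $\sigma$ and yields the required divisibility $\LM(\sigma) \mid S(f)$. With the opposite convention one would instead land on the $e_j$-summand and the argument would fail. Once this bookkeeping is settled, the proof is essentially a single application of the Schreyer description of $\Syz(\LM(F))$ combined with the minimality property of the signature.
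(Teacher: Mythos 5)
Your proof is correct and follows essentially the same route as the paper: the first containment is exactly the minimality argument underlying Proposition \ref{prop.LMSyz_defines_signatures} and Lemma \ref{lem.signature_or_not}, and the second hinges, just as in the paper's Lemma \ref{lem.Signature_differ_LM}, on the observation that $\LM(f) < x^{\alpha}\LM(f_{i_0})$ (guaranteed by the hypothesis) forces the coefficient of $x^{\alpha}\LM(f_{i_0})$ in $\overline{u^*}$ to cancel among the top terms. The only cosmetic difference is in packaging the witness: the paper takes the whole top slice $u_0$ of $u^*$ as an element of $\Syz(\LM(F))$ with leading monomial $S(f)$, while you extract a single partner index $j<i_0$ and multiply the explicit Schreyer generator, which yields the same conclusion.
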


(B) is based on Lemma \ref{lem.intro_signature_differ_LM}. Let us consider about finding an element of the leading monomial not in $\langle \LM(f_1),\ldots, \LM(f_m) \rangle$. Let $f_{m+1}$ be an element of $I$ such that $\LM(f_{m+1}) \not\in \langle \LM(f_1),\ldots, \LM(f_m) \rangle$ and put $F' = (f_1,f_2,\ldots, f_m,f_{m+1})$. Assume that $f_{m+1} = \bar{u}$ for an element $u$ in $R^m$ and $\LM(u) = S(f)$. By Lemma \ref{lem.intro_signature_differ_LM}, the equivalent class of $S(f_{m+1})$ in $\Gobs{}$ is not zero. On the other hand, since $u - e_{m+1} \in \Syz(F')$ and $\LM(u-e_{m+1}) = \LM(u)$ (see Lemma \ref{lem.Signature_differ_LM}), we can show that the equivalent class of $S(f_{m+1})$ in $\Gobs[\mathit{F}']$ is zero. Then one may interpret that finding an element $f_{m+1}$ that the leading monomial not in $\langle \LM(f_1),\ldots, \LM(f_m) \rangle$ is vanishing a non-zero element of $\Gobs{}$. If $F$ consists of homogeneous elements and the term order $<$ on $R$ is graded lexicographic order or graded reverse lexicographic order, one may consider that the signature $S(f)$ is an index of the computational cost of representing $f$ by $F$, since degrees are a factor of the complexity of computing polynomials \cite{MM84,Dub90,Giu05,BFSY05}. Therefore a naive idea to compute \gr\ bases efficiently is to choose polynomials of small signatures. In fact, several signature based algorithms follow this idea \cite{AP11,VY17,Sak20} (see also Algorithm \ref{alg:minimum_SBA}). Then we identify the polynomials of the signature that is smallest in $\Gobs{}$.

\begin{theorem}[Theorem \ref{thm.why_need_s-poly_division}]\label{thm.intro_remainder}
Assume that $F$ is not a \gr\ basis. For any element $f$ in $I$, if it holds that $\LM(f) \not \in \langle \LM(f_1),\ldots, \LM(f_m) \rangle$ and the signature $S(f)$ is minimum in $\LM(\Syz(\LM(F))) \setminus \LM(\Syz(F))$, then it satisfies that $\LM(f) = \LM(r)$, where $r$ is the remainder of any division of an S-polynomial of the signature $S(f)$. If the all terms of $f$ and $r$ are not in $\langle \LM(f_1), \ldots, \LM(f_m) \rangle$, then $f = cr$ for some $c \in K$.
\end{theorem}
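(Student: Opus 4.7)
The plan is to realize $r$ as the image of a signature-preserving chain of reductions in $R^m$ and then to use the minimality of $S(f)$, via the contrapositive of Lemma~\ref{lem.intro_signature_differ_LM}, to force $\LM(f)=\LM(r)$ and eventually $f=cr$.

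First, since $i<j$ and $x^{\alpha}\LM(f_i)=x^{\beta}\LM(f_j)$, the preimage $u_0 = \tfrac{1}{\LC(f_i)}x^{\alpha}e_i - \tfrac{1}{\LC(f_j)}x^{\beta}e_j$ of the S-polynomial $\Spoly$ has Schreyer leading monomial $x^{\beta}e_j = S(f)$. During any multivariate division of $\Spoly$ by $F$, every reduction cancels a term $c x^{\gamma}$ of the current polynomial with $x^{\gamma} \leq \LM(\Spoly) < x^{\beta}\LM(f_j)$, so each corresponding subtraction in $R^m$ has Schreyer monomial strictly less than $x^{\beta}e_j$. Hence the remainder $r$ admits a preimage $u_r$ with $\LM(u_r) = x^{\beta}e_j$. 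If $r=0$ then $u_r \in \Syz(F)$ would put $x^{\beta}e_j \in \LM(\Syz(F))$, contradicting $S(f) \notin \LM(\Syz(F))$; likewise, any alternative preimage of $r$ with smaller Schreyer leading monomial would, upon subtraction from $u_r$, yield such a syzygy. Therefore $r \neq 0$ and $S(r) = S(f)$.

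To compare $f$ with $r$, I would fix a preimage $u_f$ of $f$ with $\LM(u_f)=x^{\beta}e_j$ and form
\[ g := \LC(u_r)\,f - \LC(u_f)\,r, \]
the image of the element $\LC(u_r)u_f - \LC(u_f)u_r \in R^m$ whose Schreyer leading term vanishes. When $g\neq 0$, one has $S(g) \prec x^{\beta}e_j$, and then the minimality of $S(f)$ in $\LM(\Syz(\LM(F)))\setminus\LM(\Syz(F))$ combined with the contrapositive of Lemma~\ref{lem.intro_signature_differ_LM} forces $\LM(g) \in \langle\LM(f_1),\ldots,\LM(f_m)\rangle$. Since $\LM(f) \notin \langle\LM(F)\rangle$ by hypothesis and $\LM(r) \notin \langle\LM(F)\rangle$ because $r$ is a remainder, the subcases $\LM(f) > \LM(r)$ and $\LM(f) < \LM(r)$ would make $\LM(g)$ equal $\LM(f)$ or $\LM(r)$ respectively, yielding a contradiction. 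The only surviving possibility is $\LM(f) = \LM(r)$, which is the first conclusion.

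For the ``moreover'' clause, I would reuse the same $g$ under the stronger hypothesis that every monomial appearing in $f$ or $r$ lies outside $\langle\LM(f_1),\ldots,\LM(f_m)\rangle$. Every monomial of $g$ is then a monomial of $f$ or of $r$ and hence not in $\langle\LM(F)\rangle$, so $\LM(g) \notin \langle\LM(F)\rangle$, contradicting the conclusion of the previous paragraph that $g\neq 0$ imposes. Thus $g=0$ and $f = (\LC(u_f)/\LC(u_r))\,r$. The main obstacle is verifying that multivariate division is genuinely signature-safe under the Schreyer order; once that is in place, the minimality of $S(f)$ makes the case analysis of $\LM(f)$ versus $\LM(r)$ routine.
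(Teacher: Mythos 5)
Your proposal is correct and follows essentially the same route as the paper's proof: lift the division of the S-polynomial to an element $u$ of $R^m$ with $\LM(u)=x^{\beta}e_j$ to get $r\neq 0$ and $S(r)=S(f)$, then subtract suitably scaled preimages of $f$ and $r$ to drop the signature below $S(f)$ and invoke minimality together with Lemma \ref{lem.intro_signature_differ_LM} to force $\LM(g)\in\langle\LM(f_1),\ldots,\LM(f_m)\rangle$. The only cosmetic difference is in the ``moreover'' step, where you observe directly that every monomial of $g$ is a monomial of $f$ or $r$, rather than first noting that the leading terms cancel; both arguments are sound.
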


Let us assume again that $F$ consists of homogeneous elements and the term order $<$ on $R$ is graded lexicographic order or graded reverse lexicographic order. What would happen if we choose a homogeneous polynomial $f_{m+1}$ that satisfies
\[ S(f_{m+1}) \neq \min \left[ \LM(\Syz(\LM(F))) \setminus \LM(\Syz(F)) \right]? \]
In fact, it will happen that
\[ \begin{aligned} s &= \min \left[ \LM(\Syz(\LM(F))) \setminus \LM(\Syz(F))\right]\\ 
& = \min \left[ \LM(\Syz(\LM(\mathit{F} \cup \{f_{m+1}\}))) \setminus \LM(\Syz(\mathit{F} \cup \{f_{m+1}\}))\right] \end{aligned}\]
(Theorem \ref{thm.minimum_still_minimum}). Namely, $s$ do not vanish in $\Gobs[\mathit{F}\cup\{\mathit{f}_{m+1}\}]$ and then $F \cup \{ f_{m+1}\}$ can not be a \gr\ basis. Therefore we obtain the following theorem that gives the necessity of the S-polynomial computation.

\begin{theorem}[Corollary \ref{cor.S-poly_is_needed}]\label{thm.intro_S_need}
For any homogeneous \gr\ basis $G$ of $I$ including $F$ with respect to a graded term order, there exist a subset $F'$ and an element $g \in G$ such that $\LM(g) = \LM(r)$, where $r$ is the remainder of any division of an S-polynomial of the signature $s$ with respect to $F'$. If the non-leading terms of elements of $G$ are not in $\langle \LM(G) \rangle$, then $g = cr$ for some $c \in K$.
\end{theorem}

About (C), as mentioned above, some signature based algorithms can be intuitively thought of as methods that attempt to reduce the size of $\Gobs{}$ by annihilating the smallest elements. However, in Section \ref{sec.example}, we observe examples of transitions of $\Gobs{}$ in an implementation of a signature based algorithm, and we find examples that the sequence of $\Gobs{}$ does not monotonically go to the zero-module in the procedure. On the other hand, observing such examples leads to the conjecture that, in some cases, the first Betti number of $\Gobs{}$ represents the phase of the monomial ideal generated by $\LM(F)$. More precisely, some examples satisfy the statement that if the first Betti number increases in a step, then the new leading monomial found in that step divides another leading monomial of the generators (Example \ref{ex.not_decrease}, Example \ref{ex.example2}, Example \ref{ex.finite_field}). However, the above statement is not true in Example \ref{ex.example3}. Furthermore, in Example \ref{ex.example3}, $\Gobs{}$ is generated by a single equivalent class for the input $F$, nevertheless the instance does not terminate by a single step. We still do not know what is going on in the background of all this.

About (D), we show that $\Gobs{}$ contains flatness obstructions of a family introduced from $F$ in the context of \gr\ degenerations. Then we call $\Gobs{}$ the module of \gr ness obstructions of $F$ in this paper. Let us recall \gr\ degenerations. We call a closed subscheme $X$ in $\spec R \times_{K} \spec K[t]$ a \gr\ degeneration of $\spec R/I$ if the projection $X \rightarrow \spec K[t]$ is flat, generic fibers $X_t$ of the projection over $t \neq 0$ are isomorphic to $\spec R/I$ and the special fiber $X_0$ at $t = 0$ is isomorphic to $\spec R/\ini_{<} (I)$. There exists a \gr\ degeneration constructed from a weighting on variables \cite{Bay82,Eis95}. \gr\ degenerations are used in studies of degenerations of varieties, homological invariants, Hilbert schemes and so on \cite{Har66,KM05toric,LR11rat,CV20square,Kam22}. Our main theorem about the relationship between $\Gobs{}$ and \gr\ degenerations is the following.

\begin{theorem}[Theorem \ref{thm.criterion_by_the_image_of_SyzFt}, Lemma \ref{lem.LSF_subset_LImS}]\label{thm:intro_main2}
There exists a closed subscheme $X$ in $\spec R \times_{K} \spec K[t]$ and a direct summand $N(F)$ of $\Gobs{}$ such that
\begin{itemize}
	\item generic fibers of the projection $X \rightarrow \spec K[t]$ over $t \neq 0$ are isomorphic to $\spec R/I$, the special fiber at $t = 0$ is isomorphic to $\spec R/\langle \LM(F) \rangle$,
	\item the projection $X \rightarrow \spec K[t]$ is flat if and only if $N(F) =0$.
\end{itemize}
\end{theorem}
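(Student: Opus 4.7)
The plan is to take $X$ to be the standard \gr\ degeneration associated with a weight $\omega$ that realizes the term order $<$ on $F$, and to identify $N(F)$ with the obstruction to the syzygies of the total family specializing onto $\Syz(\LM(F))$ at $t=0$.

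First I would fix $\omega \in \NN^n$ such that $\ini_{\omega}(f_i) = \LM(f_i)$ for every $i$; such a weight always exists for a finite tuple $F$. Define the $t$-homogenization $f_i^{(t)} \in R[t]$ in the standard way, so that $f_i^{(t)}|_{t=1} = f_i$ and $f_i^{(t)}|_{t=0} = \LM(f_i)$, set $F^{(t)}_{\omega} := (f_1^{(t)},\ldots,f_m^{(t)})$, and let $X := \spec R[t]/\langle F^{(t)}_{\omega} \rangle$. Evaluating the generators at $t=1$ and at $t=0$ then yields the required descriptions of the generic and special fibers.

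Since $K[t]$ is a PID, flatness of $X \to \spec K[t]$ is equivalent to $t$ being a non-zero-divisor on $R[t]/\langle F^{(t)}_{\omega}\rangle$. From the short exact sequence
\[ 0 \to \Syz(F^{(t)}_{\omega}) \to R[t]^m \to \langle F^{(t)}_{\omega} \rangle \to 0, \]
a standard diagram chase (using that $R[t]^m$ is $t$-torsion free) shows that $t$ is a non-zero-divisor on the quotient if and only if the natural reduction map $\Syz(F^{(t)}_{\omega})/t\Syz(F^{(t)}_{\omega}) \to \Syz(\LM(F))$ is surjective. Passing to leading submodules with respect to the Schreyer order $\prec$ and letting $\LImS{}$ denote the leading submodule of the image of this specialization, one always has $\LImS{} \subseteq \ini_{\prec}(\Syz(\LM(F)))$, and the surjectivity above is equivalent to this inclusion being an equality.

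Define $N(F) := \ini_{\prec}(\Syz(\LM(F)))/\LImS{}$. Lemma \ref{lem.LSF_subset_LImS} gives $\ini_{\prec}(\Syz(F)) \subseteq \LImS{}$, yielding a short exact sequence $0 \to \LImS{}/\ini_{\prec}(\Syz(F)) \to \Gobs{} \to N(F) \to 0$. Because all three terms are monomial subquotients of $R^m$, a monomial-by-monomial section realizes $N(F)$ as a direct summand of $\Gobs{}$. Combining this with the flatness criterion of the previous paragraph proves the theorem. The main obstacle is the two-step reduction: first, converting flatness over $K[t]$ into surjectivity of the specialization $\Syz(F^{(t)}_{\omega})/t\Syz(F^{(t)}_{\omega}) \to \Syz(\LM(F))$, which requires a careful diagram chase over $R[t]$ rather than just $K[t]$; and second, lifting the equality $\LImS{} = \ini_{\prec}(\Syz(\LM(F)))$ of leading modules back to surjectivity on the whole syzygy module. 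The second step is where the Schreyer order does genuine work, via a lift-and-reduce procedure in the spirit of the proof of Schreyer's theorem.
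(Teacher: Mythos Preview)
Your approach is essentially the paper's: the same family $X=\spec R[t]/\langle F^{(t)}_{\omega}\rangle$, the same reduction of flatness to surjectivity of the specialization map on syzygies (the paper quotes this as Artin's criterion, Theorem~\ref{thm.artin}, rather than deriving it from the PID/non-zero-divisor argument, but the content is identical), the same passage to initial modules to get Corollary~\ref{cor.criterion_of_flatness}, and the same $N(F)=\langle\LSL{}\rangle/\langle\LImS{}\rangle$.

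There is one genuine gap. You fix $\omega$ only so that $\ini_{\omega}(f_i)=\LM(f_i)$, and then invoke Lemma~\ref{lem.LSF_subset_LImS} to obtain $\ini_{\prec}(\Syz(F))\subseteq\langle\LImS{}\rangle$. But that lemma carries the stronger hypothesis of Lemma~\ref{lem.LM_LW_Syz}: $\omega$ must be compatible not just with the monomials occurring in $F$, but also with the monomials $x^{\alpha}\LM(f_i)$ coming from the terms of a fixed \gr\ basis of $\Syz(F)$. Without this, a syzygy $u\in\Syz(F)$ can have $\LM(\LW_{\omega}(u))\neq\LM(u)$, and the inclusion $\langle\LS{}\rangle\subseteq\langle\LImS{}\rangle$ may fail, so $N(F)$ need not even be a quotient of $\Gobs{}$. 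The fix is immediate (enlarge the finite set used to pick $\omega$ at the outset), but as written your $\omega$ does not meet the hypotheses of the lemma you cite.

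A smaller remark: your ``monomial-by-monomial section'' is $K$-linear but not $R$-linear in general, since a monomial in $\LSL{}\setminus\LImS{}$ can be multiplied into $\LImS{}\setminus\LS{}$. The paper makes the same direct-sum assertion without further comment, and in any case only the quotient description of $N(F)$ is needed for the flatness equivalence; but your stated justification for the splitting does not work as an $R$-module argument.
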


\section{Preliminary}

Let $K$ be a field. Let $R= K[x_1,\ldots, x_n]$ be the polynomial ring over $K$ in $n$ variables attached a term order $<$. Here a term order means a total order $<$ of monomials in $R$ such that $1 < m$ for any monomial $m \neq 1$ and $m < n$ implies $ml < nl$ for any monomials $m,n,l$. We say a term order $<$ is \textit{graded} if $m < n$ for any monomials $m,n$ such that $\deg m < \deg n$ for the ordinal total degree of $R$. We use the following notation:
\begin{itemize}
	\item $\langle A \rangle$: the ideal generated by $A$ in $R$,
	\item $\LM(f)$: the leading monomial of $f$,
	\item $\LC(f)$: the leading coefficient of $f$,
	\item $\LT(f) = \LC(f) \LM(f)$: the leading term of $f$,
	\item $x^{\alpha} = x_1^{\alpha_1}x_2^{\alpha_2}\cdots x_n^{\alpha_n}$ for a vector $\alpha = (\alpha_1,\alpha_2,\ldots,\alpha_n)$.
\end{itemize}
We always consider a fixed tuple of polynomials $F = (f_1,\ldots,f_m)$ such that $f_i \neq 0\ (i = 1,\ldots, m)$ unless otherwise noted.

In this paper, a division means a reduction by $F$ such that the remainder is $0$ or has no terms in $\langle \LM(f_1) ,\ldots, \LM(f_m) \rangle$.

\begin{definition}\label{def.reduction}
For any polynomial $f$ in $R$, there exist polynomials $h_1$,$\ldots$, $h_m$ and $r$ in $R$ such that
\[ f = \sum_{i=1}^m h_i f_i + r,\ \LM(h_if_i) \leq \LM(f), \]
and $r = 0$ or the all terms of $r$ are not in $\langle \LM(f_1) ,\ldots, \LM(f_m) \rangle$. We call this form a \textit{division} of $f$ with $F$. We also call $h_1,\ldots, h_m$ the \textit{quotient} and $r$ the \textit{remainder} of this division of $f$ with $F$.
\end{definition}

Let $I = \langle F \rangle$ be the ideal generated by $F$ in $R$. We call the ideal $\langle \LM(f) \mid f \in I\setminus \{0\} \rangle$ the initial ideal of $I$ and denote it by $\ini_{<}(I)$. We say $F$ is a \gr\ basis if the initial ideal $\ini_{<}(I)$ is generated by the tuple $\LM(F) = (\LM(f_1), \ldots, \LM(f_m))$. For the elementary of \gr\ bases, see \cite[Section 15]{Eis95}.

Let $R^m = \oplus_{i=1}^m Re_i$ be the free $R$-module of rank $m$ with the basis $(e_1,\ldots,e_m)$. A \textit{monomial in $R^m$} is an element of the form $x^{\alpha}e_i$. In this paper, we always attach the following order on $R^m$.

\begin{definition}\label{def.Schreyer_order}
The \textit{Schreyer order} on $R^m$ is the order of monomials in $R^m$ such that
\[ x^{\alpha} e_i \prec x^{\beta} e_j \Leftrightarrow \begin{aligned}&x^{\alpha} \LM(f_i) < x^{\beta} \LM(f_j)\\
&\text{or}\ (x^{\alpha} \LM(f_i) = x^{\beta} \LM(f_j)\ \text{and}\ i < j).
\end{aligned} \]
\end{definition}

Let $u$ be a non-zero element in $R^m$. The \textit{leading monomial} of $u$ is the largest monomial with non-zero coefficient occurring in $u$. We define the \textit{leading coefficient} and \textit{leading term} as the same. We use the following notation:

\begin{itemize}
	\item $\LM(u)$: the leading monomial of $u$,
	\item $\LC(u)$: the leading coefficient of $u$,
	\item $\LT(u) = \LC(u) \LM(u)$: the leading term of $u$,
	\item $\LM(M) = \{\LM(u) \mid u \in M \}$ for a subset $M$ in $R^m$,
	\item $\langle N \rangle$: the $R$-submodule generated by a subset $N$ in $R^m$.
\end{itemize}

Let $M$ be an $R$-submodule in $R^m$. The \textit{initial module} $\ini_{\prec}(M)$ of $M$ is the $R$-submodule in $R^m$ generated by $\LM(M)$. A set of generators $V$ of $M$ is a \textit{\gr\ basis} of $M$ if the initial module $\ini_{\prec}(M)$ is generated by $\LM(V)$.

Let us define the syzygies.

\begin{definition}\label{def.Value_Notation}
The notation $\bar{u}$ for $u$ denotes the value of the $R$-module morphism
\[
\begin{array}{ccc}
 R^m & \rightarrow & I\\
 e_i & \mapsto & f_i
\end{array}
\]
at $u$. If $\bar{u} = 0$, then we say $u$ is a \textit{syzygy} of $F$. The \textit{syzygy module} of $F$ is the kernel of the above morphism. We denote the syzygy module of $F$ by $\Syz(F)$.
\end{definition}

In general, generators of the syzygy module $\Syz(F)$ depend on $F$ and need precise computation to determine. On the other hand, generators of the syzygy module $\Syz(\LM(F))$ is theoretically determined with an explicit form by the Schreyer's theorem.

\begin{theorem}[{\cite[Theorem 15.10]{Eis95}}]\label{thm.GB_of_initial_syzygy}
Let
\[m^{(i,j)}_i = \frac{\lcm(\LM(f_i),\LM(f_j))}{\LM(f_i)},m^{(i,j)}_j = \frac{\lcm(\LM(f_i),\LM(f_j))}{\LM(f_j)}\]
for distinct indexes $i,j$. Then the set
\[ \left\{ \left. m^{(i,j)}_i e_i - m^{(i,j)}_je_j \right|  i<j \right\}\]
is a \gr\ basis of $\Syz(\LM(F))$. In particular, the initial module of $\Syz(\LM(F))$ is generated by the set $\{ m^{(i,j)}_j e_j \mid i < j \}$.
\end{theorem}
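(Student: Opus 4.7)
The plan is to proceed in three steps: (i) verify that each proposed generator lies in $\Syz(\LM(F))$, (ii) compute its leading monomial under the Schreyer order, and (iii) prove that these leading monomials generate $\ini_{\prec}(\Syz(\LM(F)))$. Step (i) is immediate: applying the map $e_i \mapsto \LM(f_i)$ to $s_{i,j} := m^{(i,j)}_i e_i - m^{(i,j)}_j e_j$ gives $\lcm(\LM(f_i),\LM(f_j)) - \lcm(\LM(f_i),\LM(f_j)) = 0$. For step (ii), the two monomials $m^{(i,j)}_i e_i$ and $m^{(i,j)}_j e_j$ both project to $\lcm(\LM(f_i),\LM(f_j))$ in $R$, so the Schreyer comparison falls back to the index tie-breaker; since $i < j$, we obtain $\LM(s_{i,j}) = m^{(i,j)}_j e_j$.

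For step (iii), I would take an arbitrary nonzero $u = \sum_{i,k} c_{i,k} x^{\alpha_{i,k}} e_i \in \Syz(\LM(F))$, write $\LM(u) = x^{\alpha} e_j$, and set $M = x^{\alpha} \LM(f_j)$. By the construction of the Schreyer order, $M$ is the $<$-maximum of $\{x^{\alpha_{i,k}} \LM(f_i) : c_{i,k} \neq 0\}$, and among the terms of $u$ attaining this maximum the index $j$ is the largest. The syzygy relation forces the $M$-coefficient of $\sum c_{i,k} x^{\alpha_{i,k}} \LM(f_i)$ to vanish, so there exists a pair $(i',k')$ with $i' \neq j$, $c_{i',k'} \neq 0$ and $x^{\alpha_{i',k'}} \LM(f_{i'}) = M$. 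The tie-breaking rule then forces $i' < j$. Because $M$ is a common multiple of $\LM(f_{i'})$ and $\LM(f_j)$, we have $\lcm(\LM(f_{i'}), \LM(f_j)) \mid M$, whence $m^{(i',j)}_j = \lcm(\LM(f_{i'}),\LM(f_j))/\LM(f_j)$ divides $x^{\alpha}$. Therefore $m^{(i',j)}_j e_j$ divides $\LM(u)$, which establishes $\ini_{\prec}(\Syz(\LM(F))) \subseteq \langle m^{(i,j)}_j e_j \mid i < j \rangle$; the reverse containment follows from step (i).

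The main obstacle is the cancellation argument inside step (iii): one must use the vanishing of the polynomial image together with the tie-breaking rule of the Schreyer order to locate a partner index $i' < j$, so that the pair $(i', j)$ is actually one of the indexing pairs on the list of generators. Once $i'$ is produced, the divisibility conclusion is mechanical from the definition of $m^{(i,j)}_j$, and the ``in particular'' statement about the initial module is simply the combination of (ii) and (iii).
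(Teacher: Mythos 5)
Your proof is correct. Note that the paper does not prove this statement at all: it is quoted from Eisenbud's Theorem 15.10 (Schreyer's theorem), so there is no internal argument to compare against. What you have supplied is a direct, self-contained verification of the monomial special case, rather than a specialization of the general theorem (in which the Schreyer syzygies carry tails $\sum_u h_u e_u$ coming from the reductions of S-polynomials; these vanish identically when the generators are monomials, leaving exactly your $s_{i,j}$). Steps (i) and (ii) are exactly right, and the cancellation argument in step (iii) --- the coefficient of $M$ in the image must vanish, so a second maximizer $i'\neq j$ exists, and the tie-breaking rule forces $i'<j$ --- is sound; it is in fact the same ``top-degree part'' decomposition that the paper itself uses in the proofs of Proposition \ref{prop.LSFvsLSLF} and Lemma \ref{lem.Signature_differ_LM}. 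One small piece of bookkeeping: the paper's definition of a \gr\ basis of a submodule requires the set to \emph{generate} that submodule, whereas your steps establish only that the $s_{i,j}$ lie in $\Syz(\LM(F))$ and that their leading monomials generate $\ini_{\prec}(\Syz(\LM(F)))$. You should add the standard one-line remark that these two facts imply generation: dividing any $u\in\Syz(\LM(F))$ by the $s_{i,j}$ leaves a remainder that still lies in $\Syz(\LM(F))$ but whose leading monomial (if nonzero) would escape $\ini_{\prec}(\Syz(\LM(F)))$, a contradiction. With that sentence the argument is complete.
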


\begin{proposition}\label{prop.LSFvsLSLF}
It holds that $\LM(\Syz(F)) \subset \LM(\Syz(\LM(F)))$.
\end{proposition}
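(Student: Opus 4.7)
The plan is to take an arbitrary nonzero syzygy $u\in\Syz(F)$ with leading term $\LT(u)=c\,x^{\alpha}e_{k}$ under the Schreyer order, and to exhibit some index $i<k$ such that $\LM(f_{i})$ divides $x^{\alpha}\LM(f_{k})$. Once this is done, Theorem~\ref{thm.GB_of_initial_syzygy} gives $x^{\alpha}e_{k}\in\langle m^{(i,k)}_{k}e_{k}\rangle\subset\ini_{\prec}(\Syz(\LM(F)))$, proving the inclusion.

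First I would write $u=\sum_{i}u_{i}e_{i}$ and expand the relation $\bar u=\sum_{i}u_{i}f_{i}=0$ in $R$. The key move is to locate the monomial $x^{\alpha}\LM(f_{k})$ on the right hand side. From the leading part $c\,x^{\alpha}$ of $u_{k}$ we pick up the term $c\,\LC(f_{k})\,x^{\alpha}\LM(f_{k})$. Any other term $x^{\gamma}$ of $u_{k}$ satisfies $x^{\gamma}<x^{\alpha}$ in $R$ (because $x^{\gamma}e_{k}\prec x^{\alpha}e_{k}$ reduces to this on the same index $k$), and any non-leading term $t$ of $f_{k}$ satisfies $t<\LM(f_{k})$, so no further contribution to $x^{\alpha}\LM(f_{k})$ can arise inside $u_{k}f_{k}$ itself.

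Since $\bar u=0$, the coefficient $c\,\LC(f_{k})$ of $x^{\alpha}\LM(f_{k})$ must be cancelled by contributions from $u_{i}f_{i}$ with $i\neq k$. Such a cancellation requires a monomial $x^{\gamma}e_{i}$ appearing in $u$ and a term $t$ of $f_{i}$ with $x^{\gamma}t=x^{\alpha}\LM(f_{k})$. If $t$ were a non-leading term of $f_{i}$, then $x^{\gamma}\LM(f_{i})>x^{\gamma}t=x^{\alpha}\LM(f_{k})$, and by the definition of the Schreyer order this would force $x^{\gamma}e_{i}\succ x^{\alpha}e_{k}$, contradicting that $x^{\alpha}e_{k}=\LM(u)$. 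Hence we must have $t=\LM(f_{i})$ and $x^{\gamma}\LM(f_{i})=x^{\alpha}\LM(f_{k})$. The Schreyer tie-breaking rule then forces $i<k$, because $x^{\gamma}e_{i}\prec x^{\alpha}e_{k}$ with equal $\LM$-images is only allowed when $i<k$.

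Having produced such an $i<k$ with $\LM(f_{i})\mid x^{\alpha}\LM(f_{k})$, we get $\lcm(\LM(f_{i}),\LM(f_{k}))\mid x^{\alpha}\LM(f_{k})$, so $m^{(i,k)}_{k}=\lcm(\LM(f_{i}),\LM(f_{k}))/\LM(f_{k})$ divides $x^{\alpha}$, which is what we need. The only subtle point — and the one I would be most careful about — is the interaction between the two clauses of the Schreyer order: ruling out that the cancellation could come from a non-leading term of some $f_{i}$, and handling the case $i=k$ cleanly. Both are forced by the same principle: a contribution to $x^{\alpha}\LM(f_{k})$ from $x^{\gamma}e_{i}$ via a strictly smaller term of $f_{i}$ would make $x^{\gamma}e_{i}$ strictly bigger than $\LM(u)$ in the Schreyer order.
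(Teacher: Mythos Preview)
Your argument is correct. Both your proof and the paper's hinge on the same observation: since $\bar u=0$, the contributions to the monomial $x^{\alpha}\LM(f_{k})$ (where $\LM(u)=x^{\alpha}e_{k}$) must cancel. The packaging, however, differs. The paper isolates the entire ``top slice'' $u_{0}=\sum_{x^{\beta}\LM(f_{i})=x^{\alpha}\LM(f_{k})}c_{\beta,i}x^{\beta}e_{i}$ and shows directly that $u_{0}\in\Syz(\LM(F))$, so that $\LM(u)=\LM(u_{0})\in\LM(\Syz(\LM(F)))$; no appeal to Theorem~\ref{thm.GB_of_initial_syzygy} is needed. You instead extract from the cancellation a single witnessing index $i<k$ with $\LM(f_{i})\mid x^{\alpha}\LM(f_{k})$ and then invoke Theorem~\ref{thm.GB_of_initial_syzygy} to place $x^{\alpha}e_{k}$ in $\ini_{\prec}(\Syz(\LM(F)))$. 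The paper's route is slightly more self-contained; yours has the virtue of pinpointing an explicit generator $m^{(i,k)}_{k}e_{k}$ that divides $\LM(u)$, and of being careful about the leading coefficients $\LC(f_{i})$ (the paper silently treats them as $1$).
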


\begin{proof}
For any $u \in \Syz(F)$, denote
\[ u = \sum_{\alpha,i} c_{\alpha,i} x^{\alpha}e_i, \]
where $c_{\alpha,i} \in K$. Consider $x^{\xi} = \max \{ x^{\alpha}\LM(f_i) \mid c_{\alpha,i} \neq 0 \}$. Let us divide $u$ into the following two parts:
\[ u_0 = \sum_{x^{\alpha}\LM(f_i) = x^{\xi}} c_{\alpha,i} x^{\alpha}e_i,\ u_1 = \sum_{x^{\alpha}\LM(f_i) < x^{\xi}} c_{\alpha,i} x^{\alpha}e_i.  \]
By definition of the Schreyer order, we have $\LM(u) = \LM(u_0)$, thus it is enough to show that $\LM(u_0) \in \LM(\Syz(\LM(F)))$. Let us compute $\overline{u_0}$ as the following:
\[ \begin{aligned} \overline{u_0} &= \sum_{x^{\alpha}\LM(f_i) = x^{\xi}} c_{\alpha,i} x^{\alpha}f_i\\
&= \sum_{x^{\alpha}\LM(f_i) = x^{\xi}} \left(c_{\alpha,i}\LC(f_i)\right) x^{\alpha} \LM(f_i) + \sum_{x^{\alpha}\LM(f_i) = x^{\xi}} c_{\alpha,i} x^{\alpha}(f_i - \LT(f_i)).\end{aligned} \]
Since the second sum in the above consists of terms smaller than $x^{\xi}$, the term of $\bar{u} = \overline{u_0} + \overline{u_1}$ at $x^{\xi}$ is $\sum_{x^{\alpha}\LM(f_i) = x^{\xi}} \left(c_{\alpha,i}\LC(f_i)\right) x^{\alpha}\LM(f_i)$ which must be $0$. Then the element
\[ v = \sum_{x^{\alpha}\LM(f_i) = x^{\xi}} \left(c_{\alpha,i}\LC(f_i)\right) x^{\alpha}e_i\]
is a syzygy of $\LM(F)$. Therefore we have
\[ \begin{aligned} \LM(u_0) &= \max_{\prec} \left\{ x^{\alpha}e_i \left| \begin{aligned}c_{\alpha,i} \neq 0,\\
\LM(x^{\alpha}f_i) = x^{\xi} \end{aligned} \right.\right\}\\
&= \max_{\prec} \left\{ x^{\alpha}e_i \left| \begin{aligned}c_{\alpha,i}\LC(f_i) \neq 0,\\
\LM(x^{\alpha}f_i) = x^{\xi} \end{aligned} \right. \right\}  = \LM(v) \in \LM(\Syz(\LM(F))). \end{aligned}\]
\end{proof}

\section{Signatures and guessed signatures}\label{sec.Sig_and_Guess_Sig}

We recall the definition of the signatures given in \cite{Fau02,AP11}. 

\begin{definition}\label{def.signature}
Let $f$ be a non-zero element in $I$. The \textit{signature} of $f$ is the minimum element of $\{ \LM(u) \mid u \in R^m, \bar{u} = f \}$. We denote the signature of $f$ by $S(f)$.
\end{definition}

\begin{proposition}[\cite{AP11}]\label{prop.LMSyz_defines_signatures}
The set of signatures $\{ S(f) \mid f \in I \setminus \{0\} \}$ equals to the following set of monomials: $\{ s \mid \text{$s$ is a monomial in $R^m$},\ s \not\in \LM(\Syz(F))\}$. In particular, the set of the equivalent classes of the signatures is a basis of the residue module $R^m/\Syz(F)$ as a $K$-linear space.
\end{proposition}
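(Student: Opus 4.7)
The plan is to prove both assertions — the set equality and the $K$-basis property — from one core reduction step: whenever $u \in R^m$ and $v \in \Syz(F)$ share the same leading monomial, the element $u - (\LC(u)/\LC(v))\, v$ has the same image as $u$ in $I$ but a strictly smaller leading monomial with respect to $\prec$. Almost everything follows by applying this observation to well-chosen $u$ and $v$.

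For the inclusion that every signature lies outside $\LM(\Syz(F))$, I would take $f \in I \setminus \{0\}$ and let $u$ realize $S(f) = \LM(u)$. If $S(f)$ were in $\LM(\Syz(F))$, applying the reduction with any $v \in \Syz(F)$ of matching leading monomial would produce a representative of $f$ with leading monomial strictly below $S(f)$, contradicting minimality. Conversely, for a monomial $s \notin \LM(\Syz(F))$, I would first observe that $\bar{s} \neq 0$ (otherwise $s \in \Syz(F)$ and so $s = \LM(s) \in \LM(\Syz(F))$), then show $S(\bar{s}) = s$: the inequality $S(\bar{s}) \preceq s$ is automatic since $s$ itself represents $\bar{s}$, and if $S(\bar{s}) = \LM(u) \prec s$ for some $u$ with $\bar{u} = \bar{s}$, then $s - u \in \Syz(F)$ with leading monomial exactly $s$, again placing $s$ in $\LM(\Syz(F))$.

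For the $K$-basis claim, linear independence uses the same leading-monomial argument: a nontrivial $K$-combination of distinct monomials $s_i \notin \LM(\Syz(F))$ that lay in $\Syz(F)$ would have leading monomial equal to one of the $s_i$, contradicting the assumption. For spanning, I would argue by well-founded induction on $\LM(u)$ with respect to $\prec$. If $\LM(u) \in \LM(\Syz(F))$, the reduction replaces $u$ by a congruent element of strictly smaller leading monomial; iterating eventually yields a representative whose leading monomial lies outside $\LM(\Syz(F))$. Then the inductive hypothesis applied to the tail $u - \LT(u)$, which has strictly smaller leading monomial, expresses it modulo $\Syz(F)$ as a $K$-linear combination of monomials outside $\LM(\Syz(F))$, and we append $\LT(u)$. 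Well-foundedness of $\prec$ and the base case $u = 0$ make this work.

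The main subtlety is the spanning step: one must guarantee that the final representative has its \emph{entire} support — not merely its leading monomial — outside $\LM(\Syz(F))$. The clean device is to induct on $\LM(u)$ with respect to the well-order $\prec$ and to split off $\LT(u)$ before recursing, rather than trying to rewrite term-by-term, which would make termination awkward to formalize.
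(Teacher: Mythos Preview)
The paper does not supply its own proof of this proposition; it is stated with a citation to \cite{AP11} and no argument is given in the text. So there is nothing to compare against directly.

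Your argument is correct. The two inclusions for the set equality are handled cleanly: for $S(f)\notin\LM(\Syz(F))$ you use that a syzygy with the same leading monomial would let you lower $\LM(u)$ below the minimum; for the converse you correctly observe $\bar s\neq 0$ and that any strictly smaller representative $u$ would make $s-u$ a syzygy with leading monomial $s$. The $K$-basis claim is the standard Macaulay-type statement that monomials outside $\ini_\prec(\Syz(F))$ represent $R^m/\Syz(F)$, and your induction on $\LM(u)$ with respect to the well-order $\prec$, splitting off $\LT(u)$ once $\LM(u)\notin\LM(\Syz(F))$, is the right way to organize it. Your remark that the reduction may terminate at $u=0$ (the base case) rather than at a nonzero element with $\LM(u)\notin\LM(\Syz(F))$ is the one point worth making explicit, and you do mention it.
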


As easiest example of signatures, one may hope that $S(f_i) = e_i$. However, it is wrong in general. For example, assume $F=(f_1,f_2,f_3)$, $f_3 = f_1+f_2$ and $\LM(f_1) < \LM(f_2)$, then the signature of $f_3$ is not $e_3$. Indeed, put $u = e_1+e_2$. We have $\bar{u} = f_3$ and $\LM(u) = e_2$. Thus the signature of $f_3$ is less than or equal to $e_2$. Since we attach the Schreyer order on $R^m$, we have $e_2 < e_3$. Therefore we obtain $S(f_3) < e_3$. Note that, in general, we need a \gr\ basis of $\Syz(F)$ to determine the signature $S(f)$ of given polynomial $f$.

As a more reasonable object than the signatures, we introduce the guessed signatures.

\begin{definition}\label{def.guessed_signatures}
An \textit{S-pair} is a pair of monomials $(x^{\gamma}e_k,x^{\delta}e_{\ell})$ such that $k < \ell$ and $x^{\gamma}\LM(f_k) = x^{\delta} \LM(f_{\ell})$. We denote S-pairs as $p = (x^{\gamma}e_k,x^{\delta}e_{\ell})$. The \textit{S-polynomial} of $p = (x^{\gamma}e_k,x^{\delta}e_{\ell})$ denoted by $\Spoly(p)$ is the polynomial
\[ \Spoly(p) = \frac{1}{\LC(f_k)}x^{\gamma}f_k - \frac{1}{\LC(f_{\ell})}x^{\delta} f_{\ell}.\]
For an S-pair $p=(x^{\gamma}e_k,x^{\delta}e_{\ell})$, we call the second component $x^{\delta}e_{\ell}$ the \textit{guessed signature} of $p$. We denote the guessed signature of $p$ by $\hat{S}(p)$. We say an S-pair $p = (x^{\gamma}e_k,x^{\delta}e_{\ell})$ is \textit{standard} if it satisfies that $x^{\gamma}\LM(f_k) = x^{\delta}\LM(f_{\ell}) = \lcm(\LM(f_k),\LM(f_{\ell}))$.
\end{definition}

\begin{remark}\label{rem.deff_from_origin}
The original definition of guessed signature is not as in Definition \ref{def.guessed_signatures}. We note the original definition that previous studies (for example, \cite{AP11,VY17,Sak20}) used in the following: fix a tuple $F$ as a set of generators of the ideal $I$ and consider a set $G = \{g_1, g_2, \ldots, g_b \}$ of elements in $I$ including $F$, we call a pair of generators $(g_i,g_j)$ a S-pair of $G$ if $i \neq j$. An S-pair $(g_i,g_j)$ is \textit{pseudo regular} if
\[ m_i^{(i,j)}S(g_i) \neq m_j^{(i,j)}S(g_j). \]
The \textit{guessed signature} of a pseudo regular S-pair $(g_i,g_j)$ is the maximum element of the set $\{m_i^{(i,j)}S(g_i), m_j^{(i,j)}S(g_j)\}$.

In our definition (Definition \ref{def.guessed_signatures}), we only consider the situation of $G = F$, omit hypothesis on pseudo regularity, and use $x^{\delta}e_{\ell}$ as the guessed signature instead of $x^{\delta}S(f_{\ell})$ for convenience in the latter.
\end{remark}

Since it holds that
\[ \Spoly(x^{\gamma}e_k,x^{\delta}e_{\ell}) = \overline{\left(\frac{1}{\LC(f_k)}x^{\gamma}e_k - \frac{1}{\LC(f_{\ell})}x^{\delta}e_{\ell}\right)},\]
one may guess that the signature of the S-polynomial is $x^{\delta}e_{\ell}$. This is the reason why we call $x^{\delta}e_{\ell}$ the ``guessed'' signature. In fact, the equality $S(\Spoly(p)) = \hat{S}(p)$ is a non-trivial condition to determine if $F$ is a \gr\ basis or not.

\begin{theorem}\label{thm.criterion_by_signature}
The following are equivalent.

\begin{itemize}
	\item[(a)] Tuple $F =(f_1,\ldots,f_m)$ is a \gr\ basis.
	\item[(b)] For any $S$-pair $p$, the guessed signature $\hat{S}(p)$ is not the signature $S(\Spoly(p))$.
	\item[(c)] For any standard $S$-pair $p$, the guessed signature $\hat{S}(p)$ is not the signature $S(\Spoly(p))$.
	\item[(d)] The equality $\LM(\Syz(F)) = \LM(\Syz(\LM(F)))$ holds.
	\item[(e)] For any non-zero element $f \in I$, the leading monomial $\LM\left(\overline{S(f)}\right)$ equals to the leading monomial $\LM(f)$.
\end{itemize}
\end{theorem}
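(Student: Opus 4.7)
The plan is to establish the web of implications $(a) \Leftrightarrow (d)$, $(d) \Leftrightarrow (c) \Leftrightarrow (b)$, and $(d) \Leftrightarrow (e)$, using Proposition~\ref{prop.LSFvsLSLF} and Theorem~\ref{thm.GB_of_initial_syzygy} (Schreyer's theorem) as the main tools. The computational engine underpinning the equivalences involving (b) and (c) is the following observation. For an S-pair $p = (x^{\gamma}e_k, x^{\delta}e_{\ell})$ with $k<\ell$, the element
\[
u_p := \frac{1}{\LC(f_k)} x^{\gamma} e_k - \frac{1}{\LC(f_{\ell})} x^{\delta} e_{\ell}
\]
satisfies $\overline{u_p} = \Spoly(p)$ and, by the definition of the Schreyer order, $\LM(u_p) = x^{\delta} e_{\ell} = \hat{S}(p)$. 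From this I will verify that ``$\hat{S}(p)$ is not $S(\Spoly(p))$'' (interpreting the case $\Spoly(p)=0$ as $u_p \in \Syz(F)$) is equivalent to $\hat{S}(p) \in \LM(\Syz(F))$: any representative of $\Spoly(p)$ with leading monomial smaller than $\hat{S}(p)$ can be subtracted from $u_p$ to produce a syzygy with leading monomial $\hat{S}(p)$, and conversely such a syzygy can be used to cancel the leading term of $u_p$ and witness a smaller representative.

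Granting this, condition (c) asserts exactly that each generator $m_{\ell}^{(k,\ell)} e_{\ell}$ of $\LM(\Syz(\LM(F)))$ from Theorem~\ref{thm.GB_of_initial_syzygy} lies in $\LM(\Syz(F))$, which combined with Proposition~\ref{prop.LSFvsLSLF} yields (d); conversely (d) immediately implies (c). The implication $(b) \Rightarrow (c)$ is trivial, and for $(c) \Rightarrow (b)$ I will decompose an arbitrary S-pair $p$ as $x^{\mu}$ times a unique standard S-pair $p_0$, observe $\hat{S}(p) = x^{\mu} \hat{S}(p_0)$ and $\Spoly(p) = x^{\mu} \Spoly(p_0)$, and then use that $\LM(\Syz(F))$ is stable under multiplication by monomials.

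For $(a) \Leftrightarrow (d)$, the direction $(d) \Rightarrow (a)$ is a direct application of Lemma~\ref{lem.intro_signature_differ_LM}: if some $f \in I$ had $\LM(f) \not\in \langle \LM(f_1), \ldots, \LM(f_m) \rangle$, then $S(f)$ would witness a monomial in $\LM(\Syz(\LM(F))) \setminus \LM(\Syz(F))$, contradicting (d). The converse $(a) \Rightarrow (d)$ follows from Schreyer's theorem applied to $F$ itself: when $F$ is a \gr\ basis, the standard reductions of S-polynomials to zero upgrade the generators of $\Syz(\LM(F))$ to syzygies of $F$ with the same leading monomials $m_{\ell}^{(k,\ell)} e_{\ell}$, providing the reverse inclusion. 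For $(d) \Rightarrow (e)$, given a nonzero $f \in I$ with $S(f) = x^{\alpha} e_i$, pick $u \in R^m$ with $\overline{u} = f$ and $\LM(u) = S(f)$, and assume for contradiction that $\LM(f) < x^{\alpha} \LM(f_i) = \LM(\overline{S(f)})$; leading-term cancellation then forces the top stratum $u_0$ of $u$ (defined as in the proof of Proposition~\ref{prop.LSFvsLSLF}) to lie in $\Syz(\LM(F))$ with $\LM(u_0) = \LM(u)$, so by (d) there is $v \in \Syz(F)$ with $\LM(v) = \LM(u)$, and subtracting an appropriate scalar multiple of $v$ from $u$ yields a representative of $f$ with smaller leading monomial, contradicting the minimality of $S(f)$. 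The implication $(e) \Rightarrow (a)$ is automatic, since $\LM(\overline{S(f)}) = x^{\alpha} \LM(f_i) \in \langle \LM(F) \rangle$ for every nonzero $f \in I$ forces $F$ to be a \gr\ basis. I expect the most delicate points to be the bookkeeping around the degenerate case $\Spoly(p) = 0$ (where $S(\Spoly(p))$ is undefined) in the first paragraph, and the careful reuse of the cancellation decomposition from Proposition~\ref{prop.LSFvsLSLF} inside the argument for $(d) \Rightarrow (e)$.
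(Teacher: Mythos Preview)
Your proposal is correct and uses the same core ingredients as the paper: the equivalence ``$\hat{S}(p)$ is not $S(\Spoly(p))$'' $\Leftrightarrow$ $\hat{S}(p) \in \LM(\Syz(F))$ (which the paper isolates as Lemma~\ref{lem.signature_or_not}), Schreyer's theorem for the generators of $\ini_{\prec}(\Syz(\LM(F)))$, Proposition~\ref{prop.LSFvsLSLF} for the standing inclusion, the reduction-to-zero argument to pass from (a) to the syzygy side, and the top-stratum cancellation from Lemma~\ref{lem.Signature_differ_LM} for the $(d)\Rightarrow(e)$ step. The only difference is organizational: the paper proves the single cycle $(a)\Rightarrow(b)\Rightarrow(c)\Rightarrow(d)\Rightarrow(e)\Rightarrow(a)$, whereas you build a hub-and-spoke web centered at $(d)$, which forces you to supply the extra direct arguments $(c)\Rightarrow(b)$ (via monomial multiplication) and $(d)\Rightarrow(a)$ (via Lemma~\ref{lem.intro_signature_differ_LM}) that the paper gets for free from the cycle.
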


Here we note the mean of the condition (e). Let $u = \sum_{\alpha, i} c_{\alpha,i} x^{\alpha} e_i$ be an element of $R^m$ such that $\overline{u} = f$ and $\LM(u) = S(f)$. Assume that $S(f) = x^{\beta}e_j$ and put $x^{\xi} = \LM\left(\overline{S(f)}\right) = x^{\beta} \LM(f_j)$. Then by definition of the Schreyer order we have $x^{\xi} = \max \{ x^{\alpha}\LM(f_i) \mid c_{\alpha,i} \neq 0 \}$. We divide $f$ into the following two parts:
\[ f = \overline{u_0} + \overline{u-u_0} = \sum_{x^{\xi}= x^{\alpha}\LM(f_i)} c_{\alpha,i} x^{\alpha} f_i + \sum_{x^{\xi} > x^{\beta}\LM(f_i)} c_{\beta,j} x^{\beta} f_i.\]
Therefore the inequality $\LM\left(\overline{S(f)} \right) \succeq \LM(f)$ always holds, and we have $\LM(f) \in \langle \LM(f_1), \ldots, \LM(f_m) \rangle$ if the equality $\LM\left(\overline{S(f)} \right) = \LM(f)$ holds.

We proof Theorem \ref{thm.criterion_by_signature} after introducing some lemmas we need.

\begin{lemma}\label{lem.guessed_is_generators}
The set of the guessed signatures $\{\hat{S}(p) \mid \text{$p$ is a S-pair}\}$ equals to $\LM(\Syz(\LM(F)))$. Moreover, the initial module of $\Syz(\LM(F))$ is generated by a subset $\{ \hat{S}(p) \mid \text{$p$ is a standard S-pair}\}$.
\end{lemma}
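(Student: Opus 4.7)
The plan is to reduce both statements to Schreyer's theorem (Theorem \ref{thm.GB_of_initial_syzygy}), which gives a concrete description of a \gr\ basis of $\Syz(\LM(F))$ with respect to the Schreyer order. The explicit generators appearing there are already of the form $m^{(i,j)}_i e_i - m^{(i,j)}_j e_j$ with leading monomial $m^{(i,j)}_j e_j$, so the lemma will follow by matching these expressions with S-pairs and their guessed signatures.

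First I would prove the inclusion $\{\hat{S}(p) \mid p \text{ is an S-pair}\} \subseteq \LM(\Syz(\LM(F)))$. Given an S-pair $p = (x^{\gamma}e_k, x^{\delta}e_{\ell})$ with $k < \ell$ and $x^{\gamma}\LM(f_k) = x^{\delta}\LM(f_{\ell})$, consider the element $v = x^{\gamma}e_k - x^{\delta}e_{\ell} \in R^m$. Applying the map $e_i \mapsto \LM(f_i)$ kills $v$, so $v \in \Syz(\LM(F))$. By the definition of the Schreyer order and $k<\ell$, the leading monomial of $v$ is the second component, giving $\LM(v) = x^{\delta}e_{\ell} = \hat{S}(p)$, which lies in $\LM(\Syz(\LM(F)))$.

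For the reverse inclusion, I would use Schreyer's theorem to write an arbitrary $s \in \LM(\Syz(\LM(F)))$ as a monomial multiple of one of the generators of the initial module, say $s = x^{\alpha} m^{(i,j)}_j e_j$ for some indices $i < j$ and exponent $\alpha$. Then I would construct the S-pair $p = (x^{\alpha} m^{(i,j)}_i e_i,\, x^{\alpha} m^{(i,j)}_j e_j)$; the equality $x^{\alpha} m^{(i,j)}_i \LM(f_i) = x^{\alpha}\lcm(\LM(f_i),\LM(f_j)) = x^{\alpha} m^{(i,j)}_j \LM(f_j)$ shows $p$ really is an S-pair, and by construction $\hat{S}(p) = s$. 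This establishes the first assertion.

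For the second assertion, I would observe that when $p$ is standard, the definition forces $x^{\gamma} = m^{(k,\ell)}_k$ and $x^{\delta} = m^{(k,\ell)}_\ell$, so $\hat{S}(p) = m^{(k,\ell)}_\ell e_\ell$. By Schreyer's theorem, precisely these monomials generate $\ini_{\prec}(\Syz(\LM(F)))$, which is the content claimed. There is no real obstacle here, since Schreyer's theorem does the heavy lifting; the only point requiring a little care is checking that the Schreyer order picks out the second component of an S-pair as the leading monomial, which is immediate from $k < \ell$ combined with the tie-breaking rule in Definition \ref{def.Schreyer_order}.
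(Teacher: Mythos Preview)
Your proof is correct and follows essentially the same approach as the paper: both arguments reduce the claim to Schreyer's theorem (Theorem~\ref{thm.GB_of_initial_syzygy}), using the explicit generators $m^{(i,j)}_j e_j$ of $\ini_{\prec}(\Syz(\LM(F)))$ to handle the reverse inclusion and the second assertion. The only minor difference is in the forward inclusion: you directly exhibit the syzygy $v = x^{\gamma}e_k - x^{\delta}e_{\ell}$ to show $\hat S(p)\in\LM(\Syz(\LM(F)))$, whereas the paper instead factors $\hat S(p)$ as a monomial multiple of a standard guessed signature and then invokes Schreyer; your route is slightly more direct here, but the overall structure is the same.
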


\begin{proof}
The latter part is clear from Theorem \ref{thm.GB_of_initial_syzygy}. Let $L$ be the set of the guessed signature of standard S-pairs. For any S-pair $p = (x^{\gamma}e_k,x^{\delta}e_{\ell})$, there exists a monomial $x^{\lambda}$ such that
\[x^{\gamma}\LM(f_k) = x^{\delta}\LM(f_{\ell}) = x^{\lambda} \lcm(\LM(f_k),\LM(f_{\ell})).\]
Assume that $\lcm(\LM(f_k),\LM(f_{\ell})) = x^{\alpha}\LM(f_k) = x^{\beta}\LM(f_{\ell})$. We have $x^{\delta} = x^{\lambda}x^{\beta}$ and then $\hat{S}(p) = x^{\delta}e_{\ell} = x^{\lambda}\hat{S}(x^{\alpha}e_k,x^{\beta}e_{\ell})$. Therefore the guessed signature $\hat{S}(p)$ is a multiple of an element of $L$ and then an element of $\LM(\Syz(\LM(F))$. Conversely, for any element of $u \in \Syz(\LM(F))$, there exist a monomial $x^{\lambda}$ and an element $\hat{S}(x^{\gamma}e_k,x^{\delta}e_{\ell})$ in $L$ such that $\LM(u) = x^{\lambda} \hat{S}(x^{\gamma}e_k,x^{\delta}e_{\ell}) = \hat{S}(x^{\lambda}x^{\gamma}e_k,x^{\lambda}x^{\delta}e_{\ell})$. Therefore $\LM(u)$ is the guessed signature of a S-pair $(x^{\lambda}x^{\gamma}e_k,x^{\lambda}x^{\delta}e_{\ell})$.
\end{proof}

\begin{lemma}\label{lem.Signature_differ_LM}
Let $f$ be an element of $I \setminus \{0\}$. If it holds that $\LM\left(\overline{S(f)}\right) > \LM(f)$, then the signature $S(f)$ of $f$ is an element of $\LM(\Syz(\LM(F)))$.
\end{lemma}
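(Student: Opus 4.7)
The plan is to mimic the decomposition used in the proof of Proposition \ref{prop.LSFvsLSLF}, extracting the ``top'' Schreyer-part of a minimal lift of $f$ and verifying that it is a syzygy of $\LM(F)$.

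First I would choose $u \in R^m$ with $\bar{u} = f$ realizing the signature, i.e.\ $\LM(u) = S(f)$, and write $u = \sum_{\alpha,i} c_{\alpha,i}\, x^{\alpha} e_i$. Setting $x^{\xi} = \LM(\overline{S(f)})$, the definition of the Schreyer order immediately gives
\[ x^{\xi} = \max \{ x^{\alpha}\LM(f_i) \mid c_{\alpha,i} \neq 0 \}, \]
and moreover $\LM(u)$ is the largest $x^{\alpha}e_i$ (in the Schreyer order) among the indices attaining this maximum.

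Next I would introduce the ``top part''
\[ u_0 = \sum_{x^{\alpha}\LM(f_i)\,=\,x^{\xi}} c_{\alpha,i}\,\LC(f_i)\, x^{\alpha} e_i, \]
where the factor $\LC(f_i)$ is inserted precisely so that the induced map $R^m \to R,\ e_i \mapsto \LM(f_i)$, sends $u_0$ to $(\sum c_{\alpha,i}\LC(f_i))\,x^{\xi}$. Since $\LC(f_i)$ is a nonzero scalar, multiplying termwise does not change leading monomials in the Schreyer order, so $\LM(u_0) = \LM(u) = S(f)$.

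It then remains to verify that the sum $\sum c_{\alpha,i} \LC(f_i)$ vanishes, which will place $u_0$ inside $\Syz(\LM(F))$ and finish the proof. The key observation (and where the hypothesis $\LM(\overline{S(f)}) > \LM(f)$ enters) is that this sum is precisely the coefficient of $x^{\xi}$ in $\bar{u} = f$: the contribution of each $c_{\alpha,i} x^{\alpha} f_i$ to $x^{\xi}$ is $c_{\alpha,i}\LC(f_i)$ when $x^{\alpha}\LM(f_i) = x^{\xi}$, and $0$ when $x^{\alpha}\LM(f_i) < x^{\xi}$ (since then every term of $x^{\alpha}f_i$ is strictly smaller than $x^{\xi}$). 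Because $x^{\xi} > \LM(f)$ by hypothesis, this coefficient must be zero, so $u_0 \in \Syz(\LM(F))$, whence $S(f) = \LM(u_0) \in \LM(\Syz(\LM(F)))$. The only bookkeeping subtlety — and the one I would be most careful about — is inserting the $\LC(f_i)$ factor so that the cancellation of the $x^{\xi}$-coefficient in $\bar{u}$ translates directly into membership in $\Syz(\LM(F))$; everything else is a direct adaptation of the Schreyer-order argument already used in Proposition \ref{prop.LSFvsLSLF}.
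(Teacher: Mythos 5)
Your proof is correct and follows essentially the same route as the paper's: extract the top Schreyer-part $u_0$ of a minimal lift $u$ of $f$, and use the hypothesis $\LM\left(\overline{S(f)}\right) > \LM(f)$ to see that the coefficient of $x^{\xi}$ in $f$ vanishes, forcing $u_0 \in \Syz(\LM(F))$ while $\LM(u_0) = S(f)$. Your insertion of the scalars $\LC(f_i)$ is a small but genuine refinement rather than a different approach: the paper's version of $u_0$ lies in $\Syz(\LM(F))$ only after identifying $\LM(f_i)$ with $\LT(f_i)$ (i.e., tacitly taking the $f_i$ monic), whereas your normalization makes the vanishing of the $x^{\xi}$-coefficient of $f$ translate literally into membership in $\Syz(\LM(F))$ without that assumption.
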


\begin{proof}
Let $u = \sum_{\alpha, i} c_{\alpha,i} x^{\alpha} e_i$ be an element of $R^m$ such that $\bar{u} = f$ and $\LM(u) = S(f)$. Assume that $S(f) = x^{\beta}e_j$ and put $x^{\xi} = \LM\left(\overline{S(f)}\right) = x^{\beta} \LM(f_j)$. Then, by definition of the Schreyer order, we have
\[ x^{\xi} = \max \{ x^{\alpha}\LM(f_i) \mid c_{\alpha,i} \neq 0 \}.\] Therefore as the proof of Proposition \ref{prop.LSFvsLSLF}, putting
\[ u_0 = \sum_{x^{\xi}= x^{\alpha}\LM(f_i)} c_{\alpha,i} x^{\alpha} e_i,\]
we obtain $\LM(u_0) = S(f)$ and
\[f = \sum_{x^{\xi}= x^{\alpha}\LM(f_i)} c_{\alpha,i} x^{\alpha} f_i + \sum_{x^{\xi} > x^{\beta}\LM(f_j)} c_{\beta,j} x^{\beta} f_j.\]
Hence it is enough to show that $\LM(u_0) \in \LM(\Syz(\LM(F)))$. Since $x^{\xi} > \LM(f)$, it holds that $\sum_{x^{\xi} = x^{\alpha}\LM(f_i)} \left(c_{\alpha,i}\LC(f_i)\right) x^{\alpha} \LM(f_i) = 0$. Then we have
\[ \sum_{x^{\xi} = x^{\alpha}\LM(f_i)} \left(c_{\alpha,i}\LC(f_i)\right) x^{\alpha}e_i \in \Syz(\LM(F)).\]
Using the same logic in the proof of Proposition \ref{prop.LSFvsLSLF}, we obtain $\LM(u_0) \in \LM(\Syz(\LM(F)))$. 
\end{proof}

\begin{lemma}\label{lem.signature_or_not}
Let $f$ be an element of $I\setminus \{0\}$ and $u$ an element of $R^m$ such that $\bar{u} = f$. The equality $\LM(u) = S(f)$ holds if and only if $\LM(u)$ is not an element of $\LM(\Syz(F))$.
\end{lemma}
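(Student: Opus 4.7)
The plan is to prove both directions by a short contradiction argument that exploits cancellation of leading terms. In each direction the key move is to subtract an element of $\Syz(F)$ from $u$ to produce a witness with strictly smaller leading monomial.

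For the ``only if'' direction, suppose $\LM(u) = S(f)$ but, toward a contradiction, $\LM(u) \in \LM(\Syz(F))$. Pick $s \in \Syz(F)$ with $\LM(s) = \LM(u)$ and rescale so that $\LC(s) = \LC(u)$. Then $\overline{u-s} = \bar{u} - 0 = f$ and $\LM(u-s) \prec \LM(u)$, which contradicts the minimality of $S(f) = \LM(u)$ among leading monomials of lifts of $f$.

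For the ``if'' direction, suppose $\LM(u) \notin \LM(\Syz(F))$. By the definition of signature we already have $S(f) \preceq \LM(u)$, so it suffices to rule out $S(f) \prec \LM(u)$. Assume the strict inequality and choose $v \in R^m$ with $\bar{v} = f$ and $\LM(v) = S(f)$. Then $u - v \in \Syz(F)$, and because $\LM(v) = S(f) \prec \LM(u)$ the leading term of $u$ survives the subtraction, giving $\LM(u-v) = \LM(u)$. Hence $\LM(u) \in \LM(\Syz(F))$, contradicting the hypothesis.

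I do not anticipate any real obstacle: the argument is purely formal and uses only the definition of $S(f)$ as a minimum over lifts together with the fact that $R^m$ is endowed with a term order, so leading monomials behave well under subtraction. No appeal to the Schreyer structure is needed beyond the fact that $\prec$ is a term order.
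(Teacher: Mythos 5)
Your proof is correct and the ``if'' direction is word-for-word the paper's argument: assume $S(f) \prec \LM(u)$, take a minimal lift $v$, and observe that $u-v$ is a syzygy whose leading monomial is $\LM(u)$. For the ``only if'' direction the paper simply cites Proposition~\ref{prop.LMSyz_defines_signatures} (signatures are exactly the monomials outside $\LM(\Syz(F))$), whereas you inline the standard argument behind that proposition: subtract a leading-coefficient-matched syzygy $s$ to strictly lower the leading monomial of a lift of $f$. This is a perfectly valid, self-contained alternative; the only micro-gap is the case $u-s=0$, where $\LM(u-s)$ is undefined, but that case forces $f=\bar{u}=\bar{s}=0$, contradicting $f\in I\setminus\{0\}$, so it is harmless. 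Your closing remark is also accurate: nothing about the Schreyer order is used beyond $\prec$ being a term order on $R^m$.
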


\begin{proof}
By definition of signatures, inequality $\LM(u) \succeq S(f)$ always holds. If the equality $\LM(u) = S(f)$ holds, then we have $\LM(u) \not\in \LM(\Syz(F))$ from Proposition \ref{prop.LMSyz_defines_signatures}. Conversely, if it holds that $\LM(u) > S(f)$, let $v$ be an element of $R^m$ such that $\bar{v} = f$ and $\LM(v) = S(f)$. Then $u-v$ is a syzygy of $F$. Therefore we obtain that $\LM(u) = \LM(u-v) \in \LM(\Syz(F))$.
\end{proof}

\begin{proof}[Proof of Theorem \ref{thm.criterion_by_signature}]
[(a) $\implies$ (b)] If $F$ is a \gr\ basis, then for any S-pair $p = (x^{\gamma}e_k,x^{\delta}e_{\ell})$, there exist polynomials $h_1,h_2,\ldots,h_t$ in $R$ such that
\[ \Spoly(p) = \frac{1}{\LC(f_k)}x^{\gamma}f_k - \frac{1}{\LC(f_{\ell})}x^{\delta}f_{\ell} = \sum_{i=1}^t h_i f_i,\ \LM(h_if_i) < \LM(x^{\delta}f_{\ell}) \]
by taking the normal form of $\Spoly(p)$ with $F$. Let
\[ u = \frac{1}{\LC(f_k)}x^{\gamma}e_k - \frac{1}{\LC(f_{\ell})}x^{\delta}e_{\ell} -\sum_{i=1}^t h_i e_i. \]
Since $\LM(h_if_i) < \LM(x^{\delta}f_{\ell})$, we have $\LM(u) = x^{\delta}e_{\ell} \in \LM(\Syz(F))$. Therefore the guessed signature of $p$ is not the signature of $\Spoly(p)$ since any signature is not an element of $\LM(\Syz(F))$ (Proposition \ref{prop.LMSyz_defines_signatures}).

[(b) $\implies$ (c)] It is trivial.

[(c) $\implies$ (d)] From Lemma \ref{lem.guessed_is_generators}, the initial module $\ini_{\prec}\left(\Syz(\LM(F))\right)$ is generated by a set $\{ \hat{S}(p) \mid \text{$p$ is a standard S-pair}\}$. This set is a subset of $\LM(\Syz(F))$ from the assumption and Lemma \ref{lem.signature_or_not}. Therefore the equality $\LM(\Syz(F)) = \LM(\Syz(\LM(F)))$ holds.

[(d) $\implies$ (e)] For any non-zero element $f \in I$, the signature $S(f)$ is not an element of $\LM(\Syz(F))$ (Proposition \ref{prop.LMSyz_defines_signatures}). From (d), the signature $S(f)$ is also not an element of $\LM(\Syz(\LM(F)))$, therefore it holds that $\LM\left(\overline{S(f)}\right) = \LM(f)$ by Lemma \ref{lem.Signature_differ_LM}.

[(e) $\implies$ (a)] For any non-zero element $f \in I$, we have $\LM(f) = \LM\left(\overline{S(f)}\right) \in \langle \LM(f_1),\ldots, \LM(f_m) \rangle$. Therefore the tuple $F$ is a \gr\ basis.
\end{proof}

As a consequence of Theorem \ref{thm.criterion_by_signature}, we find an algebraic obstacle where the tuple of generators $F$ is a \gr\ basis. Namely, for a tuple of generators $F$,
\[ \text{$F$ is a \gr\ basis} \Leftrightarrow \langle \LM(\Syz(\LM(F)) \rangle / \langle \LM(\Syz(F)) \rangle = 0. \]
In latter, we put
\[ \LS{} = \LM(\Syz(F)),\ \LSL{} = \LM(\Syz(\LM(F))) \]
for short. Moreover, we put
\[ \Gobs{} = \langle \LSL{} \rangle/ \langle \LS{} \rangle = \ini_{\prec}(\Syz(\LM(F)))/\ini_{\prec}(\Syz(F))\]
and call it the \textit{module of \gr ness obstructions} of $F$.

We can compute the smallest non-zero element of $\LSL{}\setminus\LS{}$ using a step-by-step method.

\begin{proposition}\label{prop.find_minimum_guessed_signature}
Let $s_i$ be the $i$-th smallest element of the set
\[ \{\hat{S}(p) \mid \text{$p$ is a standard S-pair} \}.\]
Let $p$ be a standard S-pair such that $\hat{S}(p) = s_i$. Assume that $i = 1$ or $s_1,s_2,\ldots, s_{i-1} \in \LS{}\ (i \geq 2)$. Then $s_i \in \LS{}$ if and only if the reminder of any division of the S-polynomial $\Spoly(p)$ with $F$ is $0$.
\end{proposition}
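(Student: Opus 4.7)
The plan is to lift $\Spoly(p)$ to $R^m$ and track leading monomials in the Schreyer order. Write $p = (x^{\gamma}e_k, x^{\delta}e_{\ell})$ with $k < \ell$ and set
\[ u = \frac{1}{\LC(f_k)}x^{\gamma}e_k - \frac{1}{\LC(f_{\ell})}x^{\delta}e_{\ell}, \]
so that $\overline{u} = \Spoly(p)$ and, by the definition of the Schreyer order, $\LM(u) = x^{\delta}e_{\ell} = s_i$; moreover, $\LM(\Spoly(p)) < x^{\delta}\LM(f_{\ell})$ since the leading terms cancel.

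For the direction assuming some division has remainder $0$, write $\Spoly(p) = \sum_j h_j f_j$ with $\LM(h_j f_j) \leq \LM(\Spoly(p))$. Then $u - \sum_j h_j e_j$ lies in $\Syz(F)$, and each $\LM(h_j e_j)$ has Schreyer weight $\LM(h_j)\LM(f_j) \leq \LM(\Spoly(p)) < x^{\delta}\LM(f_{\ell})$, strictly less than the weight of $s_i$, so the leading monomial of this syzygy is still $s_i$. Hence $s_i \in \LS{}$.

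Conversely, assume $s_i \in \LS{}$, and fix an arbitrary division $\Spoly(p) = \sum_j h_j f_j + r$. Setting $u_1 = u - \sum_j h_j e_j$, the same weight comparison gives $\overline{u_1} = r$ and $\LM(u_1) = s_i$. Suppose for contradiction $r \neq 0$. Reduce iteratively: while the current element $w$ satisfies $\LM(w) \in \LS{}$, choose $v \in \Syz(F)$ with $\LM(v) = \LM(w)$ and replace $w$ by $w - \frac{\LC(w)}{\LC(v)}v$. Each step strictly lowers $\LM(w)$; since $\prec$ is a well-ordering, the process terminates with $w$ satisfying $\overline{w} = r$, $\LM(w) \prec s_i$, and either $w = 0$ or $\LM(w) \notin \LS{}$. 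The first possibility yields $r = 0$, contradicting our hypothesis.

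In the second, Lemma \ref{lem.signature_or_not} gives $\LM(w) = S(r)$. Since $r$ is a nonzero remainder, $\LM(r) \notin \langle \LM(F) \rangle$, whereas $\LM(\overline{S(r)}) \in \langle \LM(F) \rangle$ because $S(r)$ is a monomial in $R^m$; in particular $\LM(\overline{S(r)}) > \LM(r)$, so Lemma \ref{lem.Signature_differ_LM} yields $S(r) \in \LSL{}$. By Lemma \ref{lem.guessed_is_generators}, every element of $\LSL{}$ is a multiple $x^{\lambda}s_j$ of some standard guessed signature. Because multiplication by a monomial of $R$ does not decrease the Schreyer order, $s_j \preceq x^{\lambda}s_j = \LM(w) \prec s_i$ forces $j < i$. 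The hypothesis then gives $s_j \in \LS{}$, and since $\LS{}$ is closed under multiplication by monomials of $R$, $\LM(w) = x^{\lambda}s_j \in \LS{}$, contradicting the termination condition. Hence $r = 0$. The main obstacle is precisely this terminal step: one must combine the inductive hypothesis on the smaller $s_j$ with the monomial-module structure of $\LS{}$ and $\LSL{}$ to exclude the possibility that the syzygy reduction halts with a nonzero $w$ whose leading monomial lies outside $\LS{}$.
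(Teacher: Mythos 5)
Your proof is correct and follows essentially the same route as the paper's: both directions hinge on the lift $u$ of $\Spoly(p)$ with $\LM(u)=s_i$, Lemma \ref{lem.Signature_differ_LM} to place $S(r)$ in $\LSL{}$, and the fact that $S(r)$ is divisible by some standard guessed signature $s_j$, combined with the hypothesis $s_1,\ldots,s_{i-1}\in\LS{}$ and the closure of $\LS{}$ under monomial multiplication. The only difference is cosmetic: the paper reads off $S(r)\preceq s_i$ directly from $\overline{u}=r$ and concludes $S(r)=s_i\notin\LS{}$, whereas you realize $S(r)$ via an explicit syzygy-reduction and argue by contradiction; that extra reduction machinery is sound but not needed.
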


\begin{proof}
Assume that $p = (x^{\gamma}e_k,x^{\delta}e_{\ell})$. Let $h_1,\ldots,h_m$ be the quotients and $r_i$ the remainder of any division of $\Spoly(p)$ with $F$. Then it holds that
\[ \Spoly(p) = \sum_{t=1}^a h_t f_t + r,\ \LM(h_t f_t) <  x^{\delta}\LM(f_{\ell}) \]
and $r=0$ or $\LM(r)$ does not belong to $\langle \LM(F) \rangle$. Put $u = \frac{x^{\gamma}}{\LC(f_{k})} e_{k} - \frac{x^{\delta}}{\LC(f_{\ell})}e_{\ell} - \sum_{t=1}^a h_{t} e_{t}$. We have $\LM(u) = s_i$ and $\overline{u} = r$. It implies that $S(r) \leq s_i$ if $r \neq 0$.

If $r = 0$, then the element $u$ is a syzygy of $F$. Therefore we have $s_i \in \LS{}$.

Let us show the converse. If $i = 1$ and $r \neq 0$, then the signature $S(r)$ is an element of $\LSL{}$ from Lemma \ref{lem.Signature_differ_LM} since $\LM(r) \not \in \langle \LM(F) \rangle$. Therefore we obtain $s_1 = S(r)$ and $s_1 \not \in \LS{}$ since $s_1$ is the minimum element of $\LSL{}$. If $i \geq 2$ and $r \neq 0$, then the signature $S(r)$ is also an element of $\LSL{}$. Since $S(r) \leq s_i$, there exists an index $j$ smaller than or equal to $i$ such that $s_j | S(r)$ (note that $\langle \LSL{} \rangle$ is generated by $\{s_i\}$). Since $s_j \in \LS{}$ if $j < i$ and $S(r) \not\in\LS{}$, we have $j = i$. Therefore we obtain $s_i = S(r)$ and $s_i \not\in \LS{}$.
\end{proof}

\section{Why do we need to compute divisions of S-polynomials?}\label{sec.S-poly}

As an application of Theorem \ref{thm.criterion_by_signature}, let us give a mathematical answer to the question ``Why do we need to compute remainders of divisions of S-polynomials to get \gr\ bases?''. As far as the author knows, all previous algorithms for computing \gr\ bases require computing remainders of divisions of S-polynomials by using division algorithms, Macaulay matrices and so on. Thus, several researchers have evaluated the computational complexity and presented improvements of these computations. It is well known that this method certainly produce a non-trivial leading monomial and is a part of the Buchberger's criterion. However, in the context of simply obtaining \gr\ bases, we still do not know if this method is really necessary.

From the previous section, we know that in order to get \gr\ bases we have to vanish the non-zero elements in $\Gobs{} = \langle \LSL{} \rangle / \langle \LS{} \rangle$. Let us focus on the minimum element in $\LSL{} \setminus \LS{}$. Then the remainder of a division of an S-polynomial appears naturally.

\begin{theorem}\label{thm.why_need_s-poly_division}
Assume that $F$ is not a \gr\ basis. Let $f$ be a non-zero element in $I$ such that $\LM(f) \not\in \langle \LM(F) \rangle$.
\begin{itemize}
	\item[(a)] The signature $S(f)$ is an element of $\LSL{} \setminus \LS{}$.
	\item[(b)] If the signature $S(f)$ is the minimum element of $\LSL{} \setminus \LS{}$ and $S(f) = \hat{S}(p)$ for an S-pair $p$, then it holds that $\LM(f) = \LM(r)$ and $S(f) = S(r)$, where $r$ is the remainder of any division of $\Spoly(p)$ with $F$.
	\item[(c)] In (b), the difference
	\[ \frac{1}{\LC(f)} f - \frac{1}{\LC(r)}r\]
	is $0$ or an element of signature smaller than $S(f)$. In particular, if the all terms of $f$ are not in $\langle \LM(F) \rangle$, then $f = cr$ for some $c \in K$.
\end{itemize}
\end{theorem}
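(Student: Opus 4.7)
All three parts will be handled via Lemma \ref{lem.Signature_differ_LM}, Proposition \ref{prop.LMSyz_defines_signatures}, and the explicit division-based element $u \in R^m$ constructed in the proof of Proposition \ref{prop.find_minimum_guessed_signature}, together with the minimality of $S(f)$ to pinch any auxiliary element of $I$ into an equality. Part (a) is immediate: the general inequality $\LM(\overline{S(f)}) \geq \LM(f)$ recorded after Theorem \ref{thm.criterion_by_signature} must be strict, for if $S(f) = x^{\beta} e_j$ then $\LM(\overline{S(f)}) = x^{\beta}\LM(f_j) \in \langle \LM(F) \rangle$, whereas $\LM(f)$ is not. Lemma \ref{lem.Signature_differ_LM} then places $S(f)$ in $\LSL{}$, while Proposition \ref{prop.LMSyz_defines_signatures} keeps it out of $\LS{}$.

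For (b), write $p = (x^{\gamma} e_k, x^{\delta} e_{\ell})$ with $\hat{S}(p) = S(f) = x^{\delta} e_{\ell}$ and form $u = \frac{x^{\gamma}}{\LC(f_k)} e_k - \frac{x^{\delta}}{\LC(f_{\ell})} e_{\ell} - \sum_t h_t e_t$ from the quotients $h_t$ of a division of $\Spoly(p)$ with $F$, so that $\bar{u} = r$ and $\LM(u) = x^{\delta} e_{\ell}$. If $r = 0$, then $u \in \Syz(F)$ would force $S(f) = \LM(u) \in \LS{}$, contradicting (a); hence $r \neq 0$. Since $\LM(r) \notin \langle \LM(F) \rangle$ by the definition of a division, (a) applied to $r$, combined with $S(r) \preceq \LM(u) = S(f)$ and the minimality of $S(f)$, yields $S(r) = S(f)$. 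To obtain $\LM(f) = \LM(r)$, pick $v \in R^m$ witnessing $\bar{v} = f$ and $\LM(v) = S(f)$, choose $c \in K^{\times}$ killing the leading term of $v - cu$, and record $\overline{v - cu} = f - cr$ with $\LM(v - cu) \prec S(f)$. If $\LM(f) \neq \LM(r)$, then no cancellation occurs among the top terms of $f - cr$, so $\LM(f - cr) = \max(\LM(f), \LM(r)) \notin \langle \LM(F) \rangle$; part (a) then produces a signature for $f - cr$ in $\LSL{} \setminus \LS{}$ strictly below $S(f)$, contradicting minimality.

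For (c), set $c_0 = \LC(f)/\LC(r)$ and consider $g := f - c_0 r$. If $g = 0$ we are done; otherwise $\LM(g) < \LM(f) = \LM(r)$, and every monomial in $\supp(g)$ lies in $\supp(f) \cup \supp(r)$, which the additional hypothesis of (c) places outside $\langle \LM(F) \rangle$. Hence (a) applies to $g$, while the analogous relation $\overline{v - c_0 u} = g$ with $\LM(v - c_0 u) \preceq S(f)$, combined with the minimality of $S(f)$, forces $S(g) = S(f) = \hat{S}(p)$. Part (b) then applies to $g$ with the same S-pair $p$ and would force $\LM(g) = \LM(r)$, contradicting $\LM(g) < \LM(r)$. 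The main obstacle will be precisely this reapplication of (b) inside (c): one must verify that $g$ inherits every hypothesis of (b)---in particular that $\LM(g)$ stays outside $\langle \LM(F) \rangle$ and that $S(g)$ collapses back to $\hat{S}(p)$---and this is exactly what the term-wise hypothesis of (c) and the minimality of $S(f)$ jointly guarantee.
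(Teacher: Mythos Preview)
Your proof is correct and, for parts (a) and (b), essentially matches the paper's argument: the same element $u$ is built from a division of $\Spoly(p)$, the same witness $v$ for $S(f)$ is chosen, and the leading-term cancellation $v-cu$ (up to a scalar, the paper's $w=\frac{1}{\LC(u)}u-\frac{1}{\LC(v)}v$) is used together with minimality to force $\LM(f)=\LM(r)$. Your derivation of $S(r)=S(f)$ via part (a) and minimality is a small variant of the paper's one-line appeal to Lemma~\ref{lem.signature_or_not}, but both are fine.

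The genuine difference is in (c). The paper keeps the same auxiliary element $g=\frac{1}{\LC(u)}r-\frac{1}{\LC(v)}f$ from (b); having already shown (inside the proof of (b)) that $g\neq 0$ forces $S(g)\notin\LSL{}$ and hence $\LM(g)\in\langle\LM(F)\rangle$, it simply notes that under the extra hypothesis of (c) every monomial of $g$ lies outside $\langle\LM(F)\rangle$, so $g=0$. You instead form a different element $g=f-\frac{\LC(f)}{\LC(r)}r$, verify that it still satisfies all hypotheses of (b) (with the same S-pair $p$), and reapply (b) as a black box to get $\LM(g)=\LM(r)$, contradicting $\LM(g)<\LM(r)$. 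Your route has the virtue of using (b) as a statement rather than reopening its proof; the paper's route is a bit shorter since the needed fact about $\LM(g)$ is already on the table.
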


\begin{proof}
For (a), if the signature $S(f)$ is not an element of $\LSL{}$, then it holds that $\LM\left(\overline{S(f)}\right) = \LM(f)$ from Lemma \ref{lem.Signature_differ_LM}. However, it contradicts to $\LM(f) \not \in \langle \LM(F) \rangle$. Since the signature of an element in $I \setminus \{0\}$ is not in $\LS{}$, the signature $S(f)$ is an element of $\LSL{} \setminus \LS{}$.

For (b) and (c), assume that the signature $S(f)$ is the minimum element of $\LSL{} \setminus \LS{}$ and $S(f) = \hat{S}(p)$ for an S-pair $p = (x^{\gamma}e_k, x^{\delta}e_{\ell})$. Take a division of the S-polynomial $\Spoly(p) = \frac{1}{\LC(f_k)} x^{\gamma}f_k - \frac{1}{\LC(f_{\ell})}x^{\delta}f_{\ell}$ with $F$:
\[ \Spoly(p) = \sum_{i=1}^m h_i f_i + r,\ \LM(h_if_i) < \LM(x^{\delta}f_{\ell}). \]
Put $u = \frac{1}{\LC(f_k)} x^{\gamma}e_k - \frac{1}{\LC(f_{\ell})}x^{\delta}e_{\ell} - \sum_{i=1}^m h_i e_i$. Then we have $\bar{u} = r$ and $\LM(u) = x^{\delta}e_{\ell} = S(f) \not\in \LS{}$ (Proposition \ref{prop.LMSyz_defines_signatures}). Therefore it holds that $r = \bar{u} \neq 0$ and $S(r) = S(f)$ from Lemma \ref{lem.signature_or_not}. Let $v$ be an element in $R^m$ such that $\bar{v} = f$ and $\LM(v) = S(f)$. Put
\[ w = \frac{1}{\LC(v)}v - \frac{1}{\LC(u)}u\ \text{and}\ g = \bar{w} =\frac{1}{\LC(v)}f - \frac{1}{\LC(u)}r. \]
If $g = 0$, then we obtain $f = \frac{\LC(v)}{\LC(u)} r$ and $\LM(f) = \LM(r)$. In particular, the difference in (c) is also $0$. If $g \neq 0$, then it holds that
\[ S(g)  \leq \LM(w) < \LM(u) = S(f).\]
Since the signature $S(f)$ is the minimum element of $\LSL{} \setminus \LS{}$, the signature $S(g)$ is not an element of $\LSL{}$. Therefore we have $\LM(g) = \LM\left(\overline{S(g)}\right) \in \langle \LM(F) \rangle$ (Lemma \ref{lem.Signature_differ_LM}). It implies that
\[ \frac{\LC(f)}{\LC(v)} \LM(f) = \frac{\LC(r)}{\LC(u)}\LM(r) \]
since those are not elements of $\langle \LM(F) \rangle$. In particular, we have
\[ \frac{1}{\LC(f)} f - \frac{1}{\LC(r)} r = \frac{\LC(u)}{\LC(v)\LC(r)} f - \frac{1}{\LC(r)}r =\frac{\LC(u)}{\LC(r)} g,\]
therefore the signature of the difference in (c) is smaller than $S(f)$.

Note that in general, an element $h \in I \setminus \{0\}$ of signature smaller than $\min \left( \LSL{} \setminus \LS{} \right)$ satisfies that $\LM(h) \in \langle \LM(F) \rangle$ from Lemma \ref{lem.Signature_differ_LM} again. Then the difference in (c) is $0$ if the all terms of $f$ (and $r$) are not in $\langle \LM(f) \rangle$.
\end{proof}

We say a tuple of polynomials $F$ is \textit{simplified} if for any $f \in F$, the all non-leading terms of $f$ are not in $\langle \LM(F) \rangle$. It is easy to make a simplified tuple $\tilde{F}$ such that $\LM(F) = \LM(\tilde{F})$ by taking reductions with $F$ over non-leading terms. We call such a tuple $\tilde{F}$ a \textit{simplification} of $F$. Note that common implementations of computing reduced \gr\ bases includes steps taking simplifications since any reduced \gr\ basis is simplified. Then assuming that given tuple of polynomials is simplified does not make the situation special.

We give an answer to the question ``Why do we need to compute remainders of divisions of S-polynomials to get \gr\ bases?'' for a homogeneous simplified polynomials $F$ and a graded term order $<$.

\begin{lemma}\label{lem.degree_of_sig}
Assume that $F$ consists of homogeneous elements and $<$ is graded. Then for any homogeneous element $f \in I \setminus \{0\}$, it holds that $\deg S(f) = \deg f$. Here we define the degree of $x^{\alpha}e_i$ as $\deg x^{\alpha}e_i = \deg x^{\alpha}f_i$.
\end{lemma}

\begin{proof}
Let $u$ be an element of $R^m$ such that $\bar{u} = f$ and $\LM(u) = S(f)$. Denote by $u_d$ the terms of $u$ of degree $d$. We have $u_d \in \Syz(F)$ for $d \neq \deg f$. Since $S(f) \not\in \LS{}$, we have $S(f) = \LM(u_{\deg f})$.
\end{proof}

\begin{theorem}\label{thm.minimum_still_minimum}
Assume that $F$ is not a \gr\ basis, $F$ consists of homogeneous elements and $<$ is graded. Let $s$ be the minimum element of $\LSL{}\setminus \LS{}$. Let $f$ be a non-zero homogeneous element in $I$ such that $\LM(f) \not\in \langle \LM(F) \rangle$. Put $F' = F \cup \{f\}$ and $f_{m+1} = f$. If $S(f_{m+1}) > s$, then $s$ is the minimum element of $\LSL[\mathit{F}']\setminus \LS[\mathit{F}']$.
\end{theorem}

\begin{proof}
First we show that $s \in \LSL[\mathit{F}']\setminus \LS[\mathit{F}']$. Since $\LSL{} \subset \LSL[\mathit{F}']$, it is clear that $s \in \LSL[\mathit{F}']$. If $s \in \LS[\mathit{F}']$, then there exist a homogeneous element $u \in R^m = Re_1 \oplus \cdots \oplus Re_m$ and a homogeneous element $h \in R$ such that $s = \LM(u+he_{m+1})$ and $u+he_{m+1}$ is a homogeneous element in $\Syz(F')$. Since $s \not\in \LS{}$ and $f_{m+1} \neq 0$, we have $h\neq 0$ and $u \neq 0$. Moreoreve, since $s \in R^m$, we have $s = \LM(u) \succ \LM(he_{m+1})$. Indeed, if $\LM(u) \prec \LM(he_{m+1})$, then $s = \LM(he_{m+1}) \in Re_{m+1}$. However, it is a contradiction to $R^m \cap Re_{m+1} = \{0\}$. From Lemma \ref{lem.signature_or_not}, it and the equality $hf_{m+1} = -\bar{u}$ implies that $s = S(hf_{m+1})$. Therefore it holds that
\[ \deg s = \deg hf_{m+1} \geq \deg f_{m+1} = \deg S(f_{m+1}). \]
Since $S(f_{m+1}) > s$ and $<$ is graded, the equality $\deg s = \deg S(f_{m+1})$ holds. Then we have $h \in K$. However, it implies that $s = S(f_{m+1})$ and it is a contradiction to $S(f_{m+1}) > s$. 

Next we show that $s$ is minimum in $\LSL[\mathit{F}']\setminus \LS[\mathit{F}']$. If there exists an S-pair $p = (x^{\gamma}e_k, x^{\delta}e_{\ell})\ (1\leq k <\ell \leq m+1)$ of $F'$ such that $x^{\delta}e_{\ell} \in \LSL[\mathit{F}']\setminus \LS[\mathit{F}']$ and $x^{\delta}e_{\ell} < s$, then $\ell = m+1$ since $s$ is minimum in $\LSL{}\setminus \LS{}$ and $\LS{} \subset \LS[\mathit{F}']$. Therefore we have
\[ \deg s \geq  \deg x^{\delta}f_{m+1} \geq  \deg f_{m+1} = \deg S(f_{m+1}).\]
Since $S(f_{m+1}) > s$, the equalities
\[ \deg s = \deg x^{\delta}f_{m+1} = \deg f_{m+1}\]
hold. Then we have $x^{\delta} = 1$. However, by definition of S-pairs, it implies that $\LM(f_{m+1}) = x^{\gamma} \LM(f_k) \in \langle \LM(F) \rangle$, and it is a contradiction.
\end{proof}

\begin{corollary}\label{cor.S-poly_is_needed}
Assume that $F$ is not a \gr\ basis, $F$ consists of homogeneous elements and $<$ is graded. Let $G$ be a homogeneous \gr\ basis of $I$ including $F$. Then there exist a subset $F'$ of $G$ including $F$, an element $g \in G \setminus F'$ and an S-pair $p$ of $F$ such that
\begin{itemize}

\item the guessed signature $\hat{S}(p)$ is the minimum element of $\LSL{} \setminus \LS{}$,
\item $\LM(g) = \LM(r)$ and $S_{F'}(g)  = S_{F'}(r) = \hat{S}(p)$, where $S_{F'}(g)$ is the signature of $g$ with respect to $F'$, and $r$ is the remainder of any division of an S-polynomial $\Spoly(p)$ with $F'$,
\item if $G$ is simplified, then there exists $c \in K$ such that $g = cr$.
\end{itemize}
\end{corollary}

\begin{proof}
Let $s$ be the minimum element of $\LSL{}\setminus \LS{}$. Let $p = (x^{\gamma}e_k,x^{\delta}e_{\ell})\ (1 \leq k < \ell \leq m)$ be an S-pair such that $\hat{S}(p) = s$. Put $F_m = F$. Pick an element $f_{m+1} \in G \setminus F_m$ such that $\LM(f_{m+1}) \not\in \langle \LM(F_m) \rangle$. Put $F_{m+1} = F \cup \{f_{m+1}\}$. Let $S_{F_m}(f_{m+1})$ be the signature of $f_{m+1}$ with respect to $F_m$. If $S_{F_m}(f_{m+1}) > s$, then $s$ is the minimum element of $\LSL[\mathit{F}_{m+1}]\setminus \LS[\mathit{F}_{m+1}]$. Therefore $F_{m+1}$ is not a \gr\ basis. Repeat this process until it picks an element $f_{m+k} \in G \setminus F_{m+k-1}$ such that $\LM(f_{m+k}) \not\in \langle \LM(F_{m+k-1}) \rangle$ and
\[ s = S_{F_{m+k-1}}(f_{m+k}) = \min \left( \LSL[\mathit{F}_{m+k-1}]\setminus \LS[\mathit{F}_{m+k-1}] \right) = x^{\delta}f_{\ell}.\]
Let $r$ be the remainder of any division of the S-polynomial
\[ \Spoly(p) = \frac{1}{\LC(f_k)}x^{\gamma} f_k - \frac{1}{\LC(f_{\ell})}x^{\delta} f_{\ell} \]
with $F_{m+k-1}$. Then, from Theorem \ref{thm.why_need_s-poly_division}, we have $\LM(f_{m+k}) = \LM(r)$ and $S_{F_{m+k-1}}(f_{m+k}) = S_{F_{m+k-1}}(r) = s$. Moreover, if $G$ is simplified, then the all terms of $f_{m+k}$ are not in $\langle \LM(F_{m+k-1}) \rangle  \subset \langle \LM(G) \rangle$, therefore we have $f_{m+k} = cr$, where $c = \LC(f_{m+k})/\LC(r)$.
\end{proof}

\section{Examples of transitions of $\Gobs{}$ in a signature based algorithm}\label{sec.example}

Let us look at computational examples of $\Gobs{}$. We use a naive implementation of a signature based algorithm (Algorithm \ref{alg:minimum_SBA}), which is similar to the algorithms presented in \cite{AP11,VY17,Sak20}. The difference of Algorithm \ref{alg:minimum_SBA} is that it iterates to update the tuple of generators $F$, and then the signatures change for each step. The performance is not discussed here. The termination is clear since $R$ is a Noether ring. We use SageMath\cite{sagemath} to implement and run Algorithm \ref{alg:minimum_SBA}.

\begin{figure}[!t]
\begin{algorithm}[H]
    \caption{Signature based algorithm}
    \label{alg:minimum_SBA}
    \begin{algorithmic}[1]
    \REQUIRE a tuple $F = (f_1,f_2,\ldots, f_m)$ of elements in $R$
    \ENSURE a \gr\ basis of $I = \langle f_1, f_2 ,\ldots, f_m \rangle$
    \STATE $S \leftarrow \emptyset$
    \WHILE{$S = \emptyset$}
    \STATE $D \leftarrow \{ \hat{S}(p) \mid \text{$p$ is a standard S-pair of $F$}\}$
    \STATE sort $D$ and pick standard S-pairs, let $D = \{ \hat{S}(p_1), \ldots, \hat{S}(p_d)\}$ and $\hat{S}(p_1) \prec \hat{S}(p_2) \prec \cdots \hat{S}(p_d)$    
    \FOR{i = 1,2,\ldots,d}
    \STATE $r \leftarrow \text{the remainder of any division of $\Spoly(p_i)$ with $F$}$
    \IF{$r = 0$}
    \STATE $S \leftarrow S \cup \{\hat{S}(p_i) \}$
    \ENDIF
    \IF{$r \neq 0$}
    \STATE $F \leftarrow F \cup \{r\}$, $S \leftarrow \emptyset$
    \STATE break this loop
    \ENDIF
    \ENDFOR
    \IF{S = D}
    \RETURN $F$
    \ENDIF
    \ENDWHILE
    \end{algorithmic}
\end{algorithm}
\end{figure}

\begin{example}\label{ex.not_decrease}
Let $R = \mathbb{Q}[x,y,z]$ be the polynomial ring equipped with the graded lexicographic order of $x > y > z$. Let
\[ \begin{aligned}
 f_1 &= x^3 y - z,\\
 f_2 &= xyz - 2y,\\
 f_3 &= xy^2 - z^2.
 \end{aligned}\]
Using Algorithm \ref{alg:minimum_SBA}, we get a sequence of tuples $F_3, F_4, \ldots, F_{11}$ such that
\begin{itemize}
 \item $F_j = (f_1,f_2,\ldots,f_j)$, $\LM(f_j) \not\in \langle \LM(f_1),\LM(f_2), \ldots, \LM(f_{j-1}) \rangle$,
 \item the signature of $f_{j+1}$ with respect to $F_j$ is the minimum element of $\LSL[\mathit{F}_{\mathit{j}}] \setminus \LS[\mathit{F}_{\mathit{j}}]$ and
 \item $F_{11}$ is a \gr\ basis of $I = \langle f_1,f_2,f_3 \rangle$.
\end{itemize}
Let us observe transition of $\Gobs[\mathit{F}_{\mathit{i}}]$. The following are minimal free resolutions of $\Gobs[\mathit{F}_{\mathit{i}}]$ computed by sage math packages, and we also compare the monomial ideals generated by $\LM(F_i)$. The generator of each monomial ideal wrote in the last is the new leading monomial $\LM(f_{i})$ added in that step.
\[ \begin{aligned}
&\Gobs[\mathit{F}_{3}] \leftarrow R^3 \leftarrow R^6 \leftarrow R^3 \leftarrow 0, &\langle \LM(F_3) \rangle &= \left\langle \begin{aligned}&x^3y,xyz,\\
&xy^2 \end{aligned} \right\rangle,\\
&\Gobs[\mathit{F}_4] \leftarrow R^2 \leftarrow R^5 \leftarrow R^3 \leftarrow 0, &\langle \LM(F_4) \rangle &= \left\langle \begin{aligned} &x^3y,xyz,\\
&xy^2,z^3
\end{aligned}\right\rangle,\\
&\Gobs[\mathit{F}_{5}] \leftarrow R^4 \leftarrow R^8 \leftarrow R^4 \leftarrow 0, &\langle \LM(F_5) \rangle &= \left\langle \begin{aligned}&xyz,xy^2\\
&z^3,x^2y \end{aligned}\right\rangle,\\
&\Gobs[\mathit{F}_{6}] \leftarrow R^5 \leftarrow R^{11} \leftarrow R^7 \leftarrow R^1 \leftarrow 0, &\langle \LM(F_6) \rangle &= \langle xy,z^3 \rangle,\\
&\Gobs[\mathit{F}_{7}] \leftarrow R^4 \leftarrow R^9 \leftarrow R^6 \leftarrow R^1 \leftarrow 0, &\langle \LM(F_7) \rangle &= \langle xy,z^3,y^2z \rangle,\\
&\Gobs[\mathit{F}_{8}] \leftarrow R^2 \leftarrow R^4 \leftarrow R^2 \leftarrow 0,&\langle \LM(F_8) \rangle &= \left\langle \begin{aligned}
&xy,z^3,\\
&y^2z,y^3 \end{aligned} \right\rangle,\\
&\Gobs[\mathit{F}_{9}] \leftarrow R^1 \leftarrow R^2 \leftarrow R^1 \leftarrow 0,&\langle \LM(F_9) \rangle &= \left\langle \begin{aligned}
&xy,z^3,y^2z,\\
&y^3,xz^2 \end{aligned} \right\rangle,\\
&\Gobs[\mathit{F}_{10}] \leftarrow R^1 \leftarrow R^2 \leftarrow R^1 \leftarrow 0, &\langle \LM(F_{10}) \rangle &= \left\langle \begin{aligned} &xy,z^3,y^2z,\\
&y^3,xz^2,yz^2 \end{aligned} \right\rangle,\\
&\Gobs[\mathit{F}_{11}] \leftarrow 0, &\langle \LM(F_{11}) \rangle &= \left\langle \begin{aligned}
&xy,z^3,y^2z,\\
&y^3,yz^2,xz \end{aligned} \right\rangle.
\end{aligned}\]
For more detail, see Appendix.
\end{example}

\begin{example}\label{ex.example2}

Let us change the term order from Example \ref{ex.not_decrease}. If we set the lexicographic order of $x>y>z$ on $R$, then we get a sequence of tuples $F_3, F_4, \ldots, F_{13}$ with the same conditions as in Example \ref{ex.not_decrease}. Let us observe the transition of $\Gobs[\mathit{F}_{\mathit{i}}]$:
\[ \begin{aligned}
&\Gobs[\mathit{F}_{3}] \leftarrow R^3 \leftarrow R^6 \leftarrow R^3 \leftarrow 0, &\langle \LM(F_3) \rangle &= \left\langle \begin{aligned}&x^3y,xy^2,\\
&xyz \end{aligned} \right\rangle,\\
&\Gobs[\mathit{F}_4] \leftarrow R^3 \leftarrow R^6 \leftarrow R^3 \leftarrow 0, &\langle \LM(F_4) \rangle &= \left\langle \begin{aligned} &x^3y,xyz,\\
&y^2
\end{aligned}\right\rangle,\\
&\Gobs[\mathit{F}_{5}] \leftarrow R^2 \leftarrow R^5 \leftarrow R^3 \leftarrow 0, &\langle \LM(F_5) \rangle &= \left\langle \begin{aligned}&x^3y,xyz\\
&y^2,xz^3 \end{aligned}\right\rangle,\\
&\Gobs[\mathit{F}_{6}] \leftarrow R^4 \leftarrow R^8 \leftarrow R^4 \leftarrow 0, &\langle \LM(F_6) \rangle &= \left\langle \begin{aligned}
&xyz,y^2,\\
&xz^3,x^2y
\end{aligned}\right\rangle,\\
&\Gobs[\mathit{F}_{7}] \leftarrow R^5 \leftarrow R^{11} \leftarrow R^7 \leftarrow R^1 \leftarrow 0, &\langle \LM(F_7) \rangle &= \left\langle \begin{aligned}
&y^2,xz^3,\\
&xy
\end{aligned}\right\rangle,\\
&\Gobs[\mathit{F}_{8}] \leftarrow R^5 \leftarrow R^{12} \leftarrow R^9 \leftarrow R^2 \leftarrow 0, &\langle \LM(F_8) \rangle &= \left\langle
xz^3,y \right\rangle,\\
&\Gobs[\mathit{F}_{9}] \leftarrow R^2 \leftarrow R^4 \leftarrow R^2 \leftarrow 0,&\langle \LM(F_9) \rangle &= \left\langle \begin{aligned}
&xz^3,y,\\
&z^8 \end{aligned} \right\rangle,\\
&\Gobs[\mathit{F}_{10}] \leftarrow R^2 \leftarrow R^4 \leftarrow R^2 \leftarrow 0, &\langle \LM(F_{10}) \rangle &= \left\langle \begin{aligned}
&xz^3,y,\\
&z^7 \end{aligned} \right\rangle,\\
&\Gobs[\mathit{F}_{11}] \leftarrow R^1 \leftarrow R^2 \leftarrow R^1 \leftarrow 0, &\langle \LM(F_{11}) \rangle &= \left\langle \begin{aligned}
&y,z^7,\\
&xz^2 \end{aligned} \right\rangle,\\
&\Gobs[\mathit{F}_{12}] \leftarrow R^1 \leftarrow R^2 \leftarrow R^1 \leftarrow 0, &\langle \LM(F_{12}) \rangle &= \left\langle \begin{aligned}
&y,xz^2\\
&z^6 \end{aligned} \right\rangle,\\
&\Gobs[\mathit{F}_{13}] \leftarrow 0, &\langle \LM(F_{13}) \rangle &= \left\langle \begin{aligned}
&y,z^6\\
&xz \end{aligned} \right\rangle.
\end{aligned}\]
\end{example}

\begin{example}\label{ex.example3}
Let us see an interesting example. Let $R = \mathbb{Q}[x,y,z,w]$ be the polynomial ring equipped with the lexicographic order of $x>y>z>w$. Let
\[ \begin{aligned}
 f_1 &= xy +3y^2-2zw,\\
 f_2 &= 2x^2+y^2-5w^2,\\
 f_3 &= zw + \frac{3}{2} w^2
 \end{aligned}\]
and put $F = (f_1,f_2,f_3)$. Then we have
\[ \Gobs{} = \langle y e_2 \rangle/\langle xye_2, yzwe_2, x^2e_3,xye_3 \rangle.\]
Therefore one may consider that we obtain a \gr\ basis of $I = \langle f_1,f_2,f_3 \rangle$ by only one step that reduces the S-polynomial $\Spoly(xe_1,ye_2) = xf_1 - \frac{1}{2}yf_2$. From Theorem \ref{thm.why_need_s-poly_division}, the leading monomial of the remainder of any division of $\Spoly(xe_1,ye_2)$ with $F$ is constant and it is $xw^2$ by computing a division of $\Spoly(xe_1,ye_2)$. However, in fact, we have $\langle xy, x^2, zw, xw^2 \rangle \subsetneq \ini_{<}(I) = \langle x^2,xy,xw^2,y^4,y^3z,zw \rangle$, therefore we do not obtain a \gr\ basis of $I$ by eliminating the minimum guessed signature $ye_2$. Moreover, the first Betti number of the module of \gr ness obstructions increase.

On the other hand, if we set the degree reversed lexicographic order of $x>y>z>w$ on $R$, then we obtain a \gr\ basis of $I$ by only one step that reduces $\Spoly(xe_1,ye_2)$. Let us observe the transitions of $\Gobs[\mathit{F}_{\mathit{i}}]$ in these two cases. For the lexicographic order:
\[ \begin{aligned}
&\Gobs[\mathit{F}_{3}] \leftarrow R^1 \leftarrow R^2 \leftarrow R^1 \leftarrow 0, &\langle \LM(F_3) \rangle &= \left\langle x^2,xy,zw \right\rangle,\\
&\Gobs[\mathit{F}_4] \leftarrow R^2 \leftarrow R^4 \leftarrow R^2 \leftarrow 0, &\langle \LM(F_4) \rangle &= \left\langle \begin{aligned} &x^2,xy,zw,\\
&xw^2
\end{aligned}\right\rangle,\\
&\Gobs[\mathit{F}_{5}] \leftarrow R^1 \leftarrow R^2 \leftarrow R^1 \leftarrow 0, &\langle \LM(F_5) \rangle &= \left\langle \begin{aligned}&x^2,xy,zw,\\
&xw^2,y^3z \end{aligned}\right\rangle,\\
&\Gobs[\mathit{F}_{6}] \leftarrow 0, &\langle \LM(F_6) \rangle &= \left\langle \begin{aligned}
&x^2,xy,zw,\\
&xw^2,y^3z,y^4
\end{aligned}\right\rangle.
\end{aligned}\]
For the graded reverse lexicographic order:
\[ \begin{aligned}
&\Gobs[\mathit{F}_{3}] \leftarrow R^1 \leftarrow R^2 \leftarrow R^1 \leftarrow 0, &\langle \LM(F_3) \rangle &= \left\langle x^2,xy,zw \right\rangle,\\
&\Gobs[\mathit{F}_4] \leftarrow 0, &\langle \LM(F_4) \rangle &= \left\langle \begin{aligned} &x^2,xy,zw,\\
&y^3
\end{aligned}\right\rangle.
\end{aligned}\]
\end{example}

\begin{example}\label{ex.finite_field}
Let us consider the case of coefficients in a finite field. Let $R = \mathbb{Z}/5\mathbb{Z}[x,y,z]$ be the polynomial ring equipped with the degree lexicographic order of $x>y>z$. Let
\[ \begin{aligned}
 f_1 &= xy +4z +2,\\
 f_2 &= xyz+y^2+1,\\
 f_3 &= x^2y +4z^2
 \end{aligned}\]
Then we get a sequence of tuples $F_3,F_4,\ldots, F_9$ with the same conditions as in Example \ref{ex.not_decrease}. Let us see the transition of $\Gobs[\mathit{F}_{\mathit{i}}]$:
\[ \begin{aligned}
&\Gobs[\mathit{F}_{3}] \leftarrow R^2 \leftarrow R^4 \leftarrow R^2 \leftarrow 0, &\langle \LM(F_3) \rangle &= \left\langle xy \right\rangle,\\
&\Gobs[\mathit{F}_4] \leftarrow R^2 \leftarrow R^4 \leftarrow R^2 \leftarrow 0, &\langle \LM(F_4) \rangle &= \left\langle xy, y^2 \right\rangle,\\
&\Gobs[\mathit{F}_{5}] \leftarrow R^1 \leftarrow R^3 \leftarrow R^2 \leftarrow 0, &\langle \LM(F_5) \rangle &= \left\langle xy, y^2,xz^2 \right\rangle,\\
&\Gobs[\mathit{F}_{6}] \leftarrow R^2 \leftarrow R^4 \leftarrow R^2 \leftarrow 0, &\langle \LM(F_6) \rangle &= \left\langle xy, y^2,xz \right\rangle,\\
&\Gobs[\mathit{F}_{7}] \leftarrow R^2 \leftarrow R^5 \leftarrow R^4 \leftarrow R^1 \leftarrow 0, &\langle \LM(F_7) \rangle &= \left\langle \begin{aligned}
&xy, y^2,xz,\\
&z^3
\end{aligned} \right\rangle,\\
&\Gobs[\mathit{F}_{8}] \leftarrow R^1 \leftarrow R^2 \leftarrow R^1 \leftarrow 0, &\langle \LM(F_8) \rangle &= \left\langle \begin{aligned}
&xy, y^2, xz,\\
&z^3,yz^2
\end{aligned} \right\rangle,\\
&\Gobs[\mathit{F}_{9}] \leftarrow 0, &\langle \LM(F_9) \rangle &= \left\langle \begin{aligned}
&xy, y^2,xz,\\
&z^3,yz^2,x^2
\end{aligned} \right\rangle.\\
\end{aligned}\]
\end{example}

From the above examples, the sequence of $\Gobs[\mathit{F}_{\mathit{i}}]$ does not monotonically go to the zero-module in general. Moreover, the sequence of Betti numbers or projective dimensions of $\Gobs[\mathit{F}_{\mathit{i}}]$ also does not monotonically go to $0$. Here one may suggest the following question.
\begin{question}
Does there exists an algorithm such that the values of some invariant of $\Gobs[\mathit{F}_{\mathit{i}}]$ monotonically go to $0$? Is it fast?
\end{question}
It seems that the increase and decrease of the first Betti numbers link to phases of the leading monomials (Example \ref{ex.not_decrease}, Example \ref{ex.example2}, Example \ref{ex.finite_field}). However, there is an exceptional example (Example \ref{ex.example3}). We have not yet obtained consideration of it in this paper.

\section{\gr\ degenerations and signatures}\label{sec.flatness_of_family}

In fact, there exists an affine scheme $X$ in $\mathbb{A}^n_K \times_{K} \mathbb{A}^1_K$ such that the projection $\pi: X \rightarrow \mathbb{A}^1_K$ is flat, generic fibers $X_t = \pi^{-1}(t)$ over $t \neq 0$ are isomorphic to the affine scheme $\spec R/I$, and the special fiber $X_0 = \pi^{-1}(0)$ is isomorphic to the affine scheme $\spec R/(\ini_{<} I)$ \cite{Bay82,Eis95}. Such affine schemes are called \textit{\gr\ degenerations} of $\spec R/I$. We recall how to construct a \gr\ degeneration from a \gr\ basis and a weighting vector of positive integers.

\begin{definition}\label{def.compatible_weight}
Let $A$ be a finite set of monomials. In fact, there exists a vector of positive integers $\omega \in \mathbb{Z}^{n}_{>0}$ such that for any monomials $x^{\alpha}, x^{\beta} \in A$, $x^{\alpha} < x^{\beta}$ if and only if $\omega \cdot \alpha < \omega \cdot \beta$ \cite{Rob85}. Here we denote by $\omega \cdot \alpha$ the ordinal inner product of $\omega$ and $\alpha$. We say that such vector $\omega$ is \textit{compatible} with $A$.
\end{definition}

\begin{definition}\label{def.omega-degrees}
Assume that a vector of positive integers $\omega \in \mathbb{Z}^n_{>0}$ is compatible with the set of monomials appeared in elements of $F$. We define the \textit{$\omega$-degree of a monomial $x^{\alpha}$} as $\deg_{\omega} x^{\alpha} = \omega \cdot \alpha$. Also for any element $f \in I$, we define the \textit{$\omega$-degree of a polynomial $f$} as $\deg_{\omega} f = \max \{ \deg_{\omega} x^{\alpha} \mid \text{$x^{\alpha}$ appears in $f$} \}$. We denote by $\LW_{\omega} f$ the sum of all terms of $f$ of $\omega$-degree $\deg_{\omega} f$, and call $\LW_{\omega} f$ the \textit{top terms} of $f$ with respect to $\omega$.
\end{definition}

Let $f = \sum_{\alpha} c_{\alpha} x^{\alpha}$ be an element of $R$. We define notations
\[ f^t = \sum_{\alpha} c_{\alpha} t^{-\omega \cdot \alpha} x^{\alpha}\]
and
\[ f^{(t)} = t^{\deg_{\omega} f} f^t \]
for new variable $t$ independent to $x_1,\ldots,x_n$. The former is an element of the  Laurent polynomials ring $R[t,t^{-1}] = R \otimes_{K} K[t,t^{-1}]$, the latter is an element of the polynomial ring $R[t] = R \otimes_K K[t]$. Moreover, the latter is a homogeneous element of $R[t]$ with respect to the grading $\deg_{\omega} t^d x^{\alpha} = d + \omega \cdot \alpha$, we have $(f^{(t)})_{|t=0} = \LW_{\omega}(f)$.

In below, we fix the setting of Definition \ref{def.omega-degrees} and assume that all elements of $F$ are monic (namely, $\LC(f_i) = 1$). Therefore we have $\LW_{\omega} f_i = (f_i^{(t)})_{|t=0} = \LM(f_i)$. We denote $F^{(t)}_{\omega} = \{ f_i^{(t)} \mid i = 1,\ldots,a\}$.

\begin{theorem}[{\cite[15.8]{Eis95}}]\label{thm.Grobner_deformation}
Consider a family $X = \spec R[t]/\langle F^{(t)}_{\omega} \rangle$ on $\A^1_{K,t} = \spec K[t]$. The fibers $X_t$ over $t \neq 0$ are isomorphic to $\spec R/I$. Moreover, if $F$ is a \gr\ basis, this family is flat over $\A^1_{K,t} = \spec K[t]$ and the special fiber at $t = 0$ is isomorphic to $\spec R/(\ini_{<}I)$.
\end{theorem}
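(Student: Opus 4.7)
The plan is to handle the three assertions separately. For the generic fibers, I would consider the $K[t, t^{-1}]$-algebra automorphism $\psi$ of $R[t,t^{-1}]$ defined by $\psi(x_i) = t^{-\omega_i} x_i$ and $\psi(t) = t$. A direct calculation gives $\psi(f_i) = t^{-\deg_\omega f_i} f_i^{(t)}$, so in $R[t, t^{-1}]$ the ideal $\langle F^{(t)}_\omega \rangle$ coincides with $\psi(I R[t,t^{-1}])$, yielding an isomorphism $R[t,t^{-1}]/\langle F^{(t)}_\omega \rangle \cong (R/I)[t,t^{-1}]$. Specializing at any $t \neq 0$ therefore gives $X_t \cong \spec R/I$.

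For the special fiber, compatibility of $\omega$ with the monomials of $F$ together with the monic assumption on $F$ gives $\LW_\omega(f_i) = \LM(f_i)$, hence $f_i^{(t)}|_{t=0} = \LM(f_i)$ and the special fiber is $\spec R/\langle \LM(F) \rangle = \spec R/\ini_<(I)$ when $F$ is a \gr\ basis.

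For flatness, since $K[t]$ is a PID, it suffices to show that $M := R[t]/\langle F^{(t)}_\omega \rangle$ is torsion-free over $K[t]$, i.e., that multiplication by $t$ is injective on $M$. Assume $t g = \sum_i h_i f_i^{(t)}$ in $R[t]$. Setting $t = 0$ gives $\sum_i h_i(x,0) \LM(f_i) = 0$ in $R$, so $(h_i(x,0))_i$ is a syzygy of $\LM(F)$. By Theorem \ref{thm.GB_of_initial_syzygy}, this syzygy is an $R$-linear combination of the Schreyer generators $s_{ij} = x^{\alpha_i} e_i - x^{\alpha_j} e_j$ for standard S-pairs, where $x^{\alpha_i}\LM(f_i) = x^{\alpha_j}\LM(f_j) = \lcm(\LM(f_i), \LM(f_j))$. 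For each such pair the \gr\ basis property produces a division $x^{\alpha_i} f_i - x^{\alpha_j} f_j = \sum_k q_k f_k$ with $\deg_\omega(q_k f_k) < \deg_\omega \lcm(\LM(f_i), \LM(f_j))$, and $\omega$-homogenizing via $t$ yields a syzygy $\tilde s_{ij}$ of $F^{(t)}_\omega$ (all compensating $t$-exponents being strictly positive thanks to the degree drop) whose specialization at $t = 0$ recovers $s_{ij}$. Assembling these lifts produces $(\tilde h_i) \in R[t]^m$ with $\sum \tilde h_i f_i^{(t)} = 0$ and $\tilde h_i(x,0) = h_i(x,0)$, so $h_i - \tilde h_i = t \tilde q_i$, and dividing $t g = t \sum \tilde q_i f_i^{(t)}$ by $t$ gives $g \in \langle F^{(t)}_\omega \rangle$.

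The main obstacle is the $\omega$-homogeneous bookkeeping in the lifting step: one must verify that every term in the \gr\ division of an S-polynomial has $\omega$-degree strictly less than the lcm degree (following from the standard bound $\LM(q_k f_k) \leq \lcm(\LM(f_i), \LM(f_j))$ in the reduction combined with cancellation of the leading terms of $x^{\alpha_i} f_i - x^{\alpha_j} f_j$, and the compatibility of $\omega$ with the term order on the relevant monomials), which is precisely what forces the compensating $t$-exponents to be strictly positive and keeps the lifted syzygy inside $R[t]^m$ rather than $R[t,t^{-1}]^m$.
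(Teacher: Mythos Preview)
The paper does not prove this statement; it is quoted from \cite[15.8]{Eis95} without argument. Your overall strategy (the $K[t,t^{-1}]$-automorphism for the generic fibres, $t$-injectivity via syzygy lifting for flatness) is a correct and standard route, essentially a direct verification of the Artin-type criterion that the paper later records as Theorem~\ref{thm.artin}.

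Two points deserve tightening. First, ``$M$ torsion-free $\Leftrightarrow$ multiplication by $t$ injective'' is not automatic for an arbitrary $K[t]$-module; you should note that your generic-fibre isomorphism gives $M[t^{-1}]\cong (R/I)[t,t^{-1}]$, which is free, so any torsion in $M$ is already $t$-power torsion and $t$-injectivity indeed suffices.

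Second, and more substantively, the phrase ``compatibility of $\omega$ with the term order on the relevant monomials'' glosses over the real issue. The paper's standing hypothesis (Definition~\ref{def.omega-degrees}) only assumes $\omega$ compatible with the monomials appearing in $F$; neither $\lcm(\LM f_i,\LM f_j)$ nor the monomials of $q_kf_k$ need lie in that set, so ``$\LM(q_kf_k)<\lcm$ in $<$'' does not by itself yield ``$\deg_\omega(q_kf_k)<\deg_\omega\lcm$''. The conclusion is nevertheless true, but the correct justification is an induction along the division. Every monomial of $\Spoly(p)$ is $x^{\alpha_i}x^\delta$ or $x^{\alpha_j}x^\delta$ with $x^\delta$ a non-leading monomial of $f_i$ or $f_j$; compatibility on $F$ gives $\deg_\omega x^\delta<\deg_\omega\LM(f_i)$, whence $\deg_\omega(\Spoly(p))<\deg_\omega\lcm$. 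At each reduction step one subtracts $c\,x^\gamma f_k$ with $x^\gamma\LM(f_k)$ a monomial of the current remainder (so of $\omega$-degree $<\deg_\omega\lcm$ by induction), and compatibility on $F$ again bounds every monomial of $x^\gamma f_k$ by $\deg_\omega(x^\gamma\LM f_k)<\deg_\omega\lcm$. This is exactly what forces the compensating $t$-exponents in your homogenised syzygy $\tilde s_{ij}$ to be strictly positive and keeps it in $R[t]^m$. With these two clarifications your proof is complete.
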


Our goal in this section is to give necessary and sufficient conditions of that the family $X = \spec R[t]/\langle F^{(t)}_{\omega} \rangle$ is flat over $\A^1_{K,t}$ from the point of view of signatures.

Let us start from analysis the flatness of $X = \spec R[t]/\langle F^{(t)}_{\omega} \rangle$. In the following discussion, we identify the $K[t]$-module $K[t]/\langle t \rangle$ as $K$. Artin gives a criterion for the flatness of the family $X$ via the syzygy modules.

\begin{theorem}[{\cite[1.3]{Art76lec}}, see also \cite{Bay93can}]\label{thm.artin}
The family $\spec R[t]/\langle F^{(t)}_{\omega} \rangle$ is flat over $\mathbb{A}^1_{K,t}$ if and only if the morphism
\[ \varphi : \Syz(F^{(t)}_{\omega}) \otimes_{K[t]} K \rightarrow \Syz(\LM(F))\]
\[ \begin{aligned}
e_i^{(t)} &\mapsto e_i\\
 t &\mapsto 0
\end{aligned}
\]
is surjective.
\end{theorem}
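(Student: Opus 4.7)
The plan is to apply the local criterion of flatness and then compute the relevant $\Tor$ from the defining short exact sequence of $\Syz(F^{(t)}_{\omega})$. Since $K[t]$ is a principal ideal domain with maximal ideal $(t)$, a $K[t]$-module is flat if and only if it is torsion-free; and because the fibers of $X \to \A^1_{K,t}$ over $t = a \neq 0$ are already isomorphic to $\spec R/I$ (via an obvious rescaling $x_i \mapsto a^{\omega_i}x_i$), no torsion can be supported at the primes $(t-a)$ with $a \neq 0$. Hence the flatness of $R[t]/J$ over $K[t]$, where $J := \langle F^{(t)}_{\omega} \rangle$, reduces to the single condition that $t$ is a non-zero-divisor on $R[t]/J$, equivalently $J \cap tR[t] = tJ$, equivalently $\Tor_1^{K[t]}(R[t]/J,\, K) = 0$.

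The main computation is then to relate this vanishing to surjectivity of $\varphi$. I would start from the short exact sequence of $K[t]$-modules
\[ 0 \to \Syz(F^{(t)}_{\omega}) \to R[t]^m \to J \to 0 \]
and tensor over $K[t]$ with $K = K[t]/(t)$. Since $R[t]^m$ is free, hence flat, over $K[t]$, the induced long exact $\Tor$-sequence collapses to
\[ 0 \to \Tor_1^{K[t]}(J,\, K) \to \Syz(F^{(t)}_{\omega}) \otimes_{K[t]} K \xrightarrow{\tilde{\varphi}} R^m \to J/tJ \to 0, \]
using the identifications $R[t]^m \otimes_{K[t]} K = R^m$, $J \otimes_{K[t]} K = J/tJ$. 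Specializing a syzygy $\sum u_i f_i^{(t)} = 0$ at $t = 0$ gives $\sum (u_i|_{t=0}) \LM(f_i) = 0$, so $\tilde{\varphi}$ lands in $\Syz(\LM(F)) \subset R^m$; this produces the advertised map $\varphi$.

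Next I would factor the surjection $R^m \twoheadrightarrow J/tJ$ through the further natural surjection $J/tJ \twoheadrightarrow J/(J \cap tR[t]) = \langle \LM(F) \rangle$, using that $f_i^{(t)} \equiv \LM(f_i) \pmod{tR[t]}$. The composite $R^m \to \langle \LM(F) \rangle$ sends $e_i \mapsto \LM(f_i)$, so its kernel is $\Syz(\LM(F))$. A short diagram chase (the first map is surjective, the second has kernel $(J \cap tR[t])/tJ$) gives
\[ \Syz(\LM(F))\,/\,\im(\varphi) \;\cong\; \ker\!\bigl(J/tJ \to \langle \LM(F) \rangle\bigr) \;=\; (J \cap tR[t])/tJ \;\cong\; \Tor_1^{K[t]}(R[t]/J,\, K). \]
So $\varphi$ is surjective if and only if the right-hand side vanishes, which by the previous paragraph is the flatness criterion.

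The main obstacle I anticipate is not in the diagram chase itself, which is routine, but in justifying the opening reduction from global flatness to the single $\Tor$-vanishing at $(t)$: one must know that generic fibers are already isomorphic to $\spec R/I$ (not merely schematically close to it), so that the only potential source of $K[t]$-torsion in $R[t]/J$ sits at the prime $(t)$. Once this is granted, the argument is essentially formal, and the signature-theoretic flavor of the conclusion is visible only through the fact that the same sequence computes $\Gobs{}$ when one combines the map $\varphi$ with the Schreyer presentation of $\Syz(\LM(F))$, which is what the subsequent lemmas will exploit.
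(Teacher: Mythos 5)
The paper does not prove this statement; it is quoted from Artin's lectures (and Bayer's notes), so there is no in-text argument to compare against. Your proof is correct and is essentially the standard derivation of Artin's ``relations lift'' criterion: reduce flatness over the PID $K[t]$ to the vanishing of $\Tor_1^{K[t]}(R[t]/\langle F^{(t)}_{\omega}\rangle, K)$ at the origin, then identify that $\Tor_1$ with $\Syz(\LM(F))/\im\varphi$ via the two short exact sequences $0\to\Syz(F^{(t)}_{\omega})\to R[t]^m\to J\to 0$ and $0\to J\to R[t]\to R[t]/J\to 0$; the diagram chase and the identifications $J/(J\cap tR[t])\cong\langle\LM(F)\rangle$ and $(J\cap tR[t])/tJ\cong\Tor_1^{K[t]}(R[t]/J,K)$ are all right. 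The one step whose justification is too weak as written is the opening reduction: knowing merely that each closed fiber over $t=a\neq 0$ is abstractly isomorphic to $\spec R/I$ does not by itself exclude $K[t]$-torsion supported at $(t-a)$. What you actually need (and what the rescaling gives when performed uniformly in $t$ rather than fiber by fiber) is that the substitution $x_i\mapsto t^{\omega_i}x_i$ is an automorphism of $R[t,t^{-1}]$ carrying $I\cdot R[t,t^{-1}]$ to $J\cdot R[t,t^{-1}]$, so that $R[t,t^{-1}]/JR[t,t^{-1}]\cong (R/I)\otimes_K K[t,t^{-1}]$ is free, hence torsion-free, over $K[t,t^{-1}]$; any torsion element of $R[t]/J$ then dies in this localization and is therefore $t$-power torsion. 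With that substitution of the fiberwise claim by the trivialization over $K[t,t^{-1}]$, the argument is complete.
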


Considering initial modules in $R^m$, we obtain the following corollary of Theorem \ref{thm.artin} that states a relationship between the flatness and guessed signatures.

\begin{corollary}\label{cor.criterion_of_flatness}
The family $\spec R[t]/\langle F^{(t)}_{\omega} \rangle$ is flat over $\mathbb{A}^1_{K,t}$ if and only if $\LM\left(\varphi\left(\Syz(F^{(t)}_{\omega}) \otimes_{K[t]} K\right)\right) = \LSL{}$.
\end{corollary}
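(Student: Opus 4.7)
The plan is to reduce the statement to Artin's criterion (Theorem \ref{thm.artin}) and then to the following standard fact about submodules of $R^m$ under a term order:

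\smallskip

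\emph{If $N \subseteq M$ are $R$-submodules of $R^m$, then $N = M$ if and only if $\LM(N) = \LM(M)$ (as sets of monomials).}

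\smallskip

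First I would invoke Theorem \ref{thm.artin} to rewrite the flatness of $\spec R[t]/\langle F^{(t)}_{\omega}\rangle$ over $\mathbb{A}^1_{K,t}$ as the surjectivity of $\varphi$. Since $\varphi$ takes values in $\Syz(\LM(F))$ by construction, $\im(\varphi)$ is an $R$-submodule of $\Syz(\LM(F))$, and so surjectivity of $\varphi$ is equivalent to the equality $\im(\varphi) = \Syz(\LM(F))$ of $R$-submodules of $R^m$.

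Next I would establish the submodule lemma above. The ``only if'' direction is immediate. For the ``if'' direction, the plan is a standard minimality argument: assuming $N \subsetneq M$ but $\LM(N) = \LM(M)$, pick $u \in M \setminus N$ with $\LM(u)$ minimal with respect to $\prec$ (well-ordering guarantees existence). Since $\LM(u) \in \LM(M) = \LM(N)$, there is some $v \in N$ with $\LM(v) = \LM(u)$; subtracting the appropriate scalar multiple of $v$ from $u$ yields an element of $M \setminus N$ whose leading monomial is strictly $\prec$-smaller than $\LM(u)$, contradicting minimality. Applying this lemma to $N = \im(\varphi) \subseteq M = \Syz(\LM(F))$ and recalling that $\LM(\Syz(\LM(F))) = \LSL{}$, the equality $\im(\varphi) = \Syz(\LM(F))$ is equivalent to $\LM(\varphi(\Syz(F^{(t)}_{\omega}) \otimes_{K[t]} K)) = \LSL{}$. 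Chaining the equivalences gives the corollary.

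I do not anticipate a genuine obstacle: once Artin's criterion is in hand, the only point to be careful about is that the minimality argument is for submodules of the free module $R^m$ rather than ideals of $R$, but the proof goes through unchanged since the Schreyer order is a well-order on monomials of $R^m$ and leading coefficients still live in the field $K$. The tensor product $\otimes_{K[t]} K$ plays no role beyond ensuring $\im(\varphi)$ lands in the correct $R$-module, so no compatibility check between the grading in $t$ and the term order $\prec$ is needed at this step.
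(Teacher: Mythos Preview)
Your proposal is correct and matches the paper's intended argument: the paper simply states that the corollary follows from Theorem \ref{thm.artin} by ``considering initial modules in $R^m$'', which is precisely your submodule lemma (equality of $N\subseteq M$ is detected by equality of $\LM(N)$ and $\LM(M)$). Your minimality argument for that lemma is the standard one and is sound for the Schreyer order on $R^m$.
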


We denote by $\LImS{}$ the set of leading monomials of the image of the morphism $\varphi: \Syz(F^{(t)}_{\omega}) \otimes_{K[t]} K \rightarrow \Syz(\LM(F))$, namely, $\LImS{} = \LM\left(\varphi\left(\Syz(F^{(t)}_{\omega}) \otimes_{K[t]} K\right)\right)$. Combining this with the results we proved, we obtain the following theorem.

\begin{theorem}\label{thm.criterion_by_the_image_of_SyzFt}
A tuple $F$ is a \gr\ basis if and only if $\spec R[t]/\langle F^{(t)}_{\omega} \rangle$ is flat over $\mathbb{A}^1_{K,t}$ and $\LS{} = \LImS{}$.
\end{theorem}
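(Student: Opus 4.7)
The plan is to observe that this theorem is a direct consequence of the machinery already set up, once we record the chain of inclusions
\[ \LS{} \subset \LImS{} \subset \LSL{}. \]
The right inclusion is tautological: the morphism $\varphi$ has codomain $\Syz(\LM(F))$, so every leading monomial in its image lies in $\LSL{} = \LM(\Syz(\LM(F)))$. The left inclusion is the content of the forward-referenced Lemma \ref{lem.LSF_subset_LImS}. With this tower in place, the theorem is really a statement about when the whole tower collapses.

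First I would translate each of the three conditions occurring in the statement into an equality on this tower. By Corollary \ref{cor.criterion_of_flatness}, flatness of $\spec R[t]/\langle F_{\omega}^{(t)} \rangle$ over $\mathbb{A}^1_{K,t}$ is equivalent to the equality $\LImS{} = \LSL{}$ of the rightmost two terms. By Theorem \ref{thm.criterion_by_signature} (equivalence of (a) and (d)), $F$ is a Gr\"obner basis if and only if the outermost equality $\LS{} = \LSL{}$ holds.

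Then I would conclude as follows. For the forward direction, assume that $F$ is a Gr\"obner basis. Then $\LS{} = \LSL{}$, and the tower forces $\LImS{} = \LSL{}$ as well as $\LS{} = \LImS{}$; the first gives flatness, the second is the remaining conclusion. For the converse, assume flatness and $\LS{} = \LImS{}$. Flatness gives $\LImS{} = \LSL{}$, and combining with the hypothesis yields $\LS{} = \LSL{}$, so $F$ is a Gr\"obner basis.

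The only non-formal input here is the inclusion $\LS{} \subset \LImS{}$ from Lemma \ref{lem.LSF_subset_LImS}; everything else is a tautological chase of the two equivalences already proved. So there is no real obstacle in this proof beyond assembling the pieces in the correct order, and no additional calculation is required.
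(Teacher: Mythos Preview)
Your converse direction is identical to the paper's. The forward direction, however, has a subtle hypothesis mismatch. You invoke the inclusion $\LS{} \subset \LImS{}$ from Lemma~\ref{lem.LSF_subset_LImS}, but that lemma carries an \emph{extra} assumption on $\omega$: it must be compatible with the larger set $A$ of Lemma~\ref{lem.LM_LW_Syz}, which involves monomials coming from a \gr\ basis of $\Syz(F)$. The standing hypothesis for Theorem~\ref{thm.criterion_by_the_image_of_SyzFt} is only the weaker one from Definition~\ref{def.omega-degrees} (compatibility with the monomials appearing in $F$). So as written, your forward implication proves the theorem only under the stronger assumption of Lemma~\ref{lem.LSF_subset_LImS}.

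The paper sidesteps this entirely: in the forward direction it does not use the tower at all. Instead it cites Theorem~\ref{thm.Grobner_deformation} (the classical fact that when $F$ is a \gr\ basis the family is flat) to get flatness directly, then applies Corollary~\ref{cor.criterion_of_flatness} to obtain $\LImS{} = \LSL{}$, and combines this with $\LS{} = \LSL{}$ from Theorem~\ref{thm.criterion_by_signature} to conclude $\LS{} = \LImS{}$. This route needs only the weaker hypothesis on $\omega$. Replacing your appeal to Lemma~\ref{lem.LSF_subset_LImS} by an appeal to Theorem~\ref{thm.Grobner_deformation} closes the gap and recovers exactly the paper's argument.
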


\begin{proof}
If $F$ is a \gr\ basis, then by Theorem \ref{thm.criterion_by_signature}, Theorem \ref{thm.Grobner_deformation} and Corollary \ref{cor.criterion_of_flatness} we have that $\spec R[t]/\langle F^{(t)}_{\omega} \rangle$ is flat over $\mathbb{A}^1_{K,t}$ and $\LS{} = \LSL{} = \LImS{}$.

Conversely, assume that $\spec R[t]/\langle F^{(t)}_{\omega} \rangle$ is flat over $\mathbb{A}^1_{K,t}$ and $\LS{} = \LImS{}$. Then we have $\LImS{} = \LSL{}$ (Corollary \ref{cor.criterion_of_flatness}). Therefore $F$ is a \gr\ basis since $\LS{} = \LSL{}$ (Theorem \ref{thm.criterion_by_signature}).
\end{proof}

Assuming a special assumption on the weight vector $\omega$, we show that the set $\LS{}$ is included in $\LImS{}$.

\begin{lemma}\label{lem.LM_LW_Syz}
Let $V_F = \{v_1,\ldots, v_b\}$ be a \gr\ basis of the syzygy module $\Syz(F)$. Let $A$ be the sum of the following sets of monomials in $R$:
\begin{itemize}
	\item $\{ x^{\alpha} \mid \text{$x^{\alpha}$ appears in an element of $F$}\}$,
	\item $\{x^{\alpha} \LM(f_i) \mid \text{$x^{\alpha} e_i$ appears in an element of $V_F$}\}$.
\end{itemize}
Assume that $\omega$ is compatible with $A$. Then for any element $v$ of $V_F$, it holds that $\LM(\LW_{\omega}(v)) = \LM(v)$.
\end{lemma}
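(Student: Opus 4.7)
The plan is to extend the $\omega$-degree from $R$ to $R^m$ in the way compatible with the Schreyer order, by setting $\deg_\omega(x^\alpha e_i) := \omega\cdot\alpha + \deg_\omega(\LM(f_i))$, so that $\deg_\omega(x^\alpha e_i) = \deg_\omega(x^\alpha \LM(f_i))$. With this convention, $\LW_\omega(v)$ is the sum of those terms of $v$ achieving the maximal $\omega$-degree. The goal then becomes showing that, for $v \in V_F$, the top-terms with respect to $\omega$ always include the Schreyer leading term of $v$, and nothing strictly larger.

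I would fix $v = \sum_{\alpha,i} c_{\alpha,i} x^\alpha e_i \in V_F$ and write $\LM(v) = x^{\alpha_0} e_{i_0}$. By the very definition of $A$, every monomial $x^\alpha \LM(f_i)$ coming from a nonzero term of $v$ belongs to $A$, so the compatibility hypothesis on $\omega$ yields
\[ x^\alpha \LM(f_i) \le x^{\alpha_0} \LM(f_{i_0}) \iff \deg_\omega(x^\alpha e_i) \le \deg_\omega(x^{\alpha_0} e_{i_0}), \]
with equality preserved on both sides simultaneously.

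I would then combine this with the definition of the Schreyer order. Any term $x^\alpha e_i$ of $v$ with $x^\alpha e_i \prec x^{\alpha_0} e_{i_0}$ falls into one of two cases: either $x^\alpha \LM(f_i) < x^{\alpha_0} \LM(f_{i_0})$, in which case the $\omega$-degree strictly decreases and the term is discarded by $\LW_\omega$; or $x^\alpha \LM(f_i) = x^{\alpha_0} \LM(f_{i_0})$ together with $i < i_0$, in which case the term survives in $\LW_\omega(v)$ but is still strictly smaller than $x^{\alpha_0} e_{i_0}$ in the Schreyer order. In either case, $x^{\alpha_0} e_{i_0}$ remains the Schreyer-largest monomial appearing in $\LW_\omega(v)$, giving $\LM(\LW_\omega(v)) = x^{\alpha_0} e_{i_0} = \LM(v)$.

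I do not anticipate a serious obstacle here; the main subtlety worth noting is that the Schreyer order refines the comparison of $x^\alpha \LM(f_i)$ by a tie-break on the index $i$, whereas the $\omega$-weighting can only detect the coarser comparison in $R$. The hypothesis that $\omega$ is compatible with $A$ is precisely what ensures that this coarser comparison is read faithfully by $\omega$-degrees, while the tie-breaking is invisible to $\LW_\omega$ but harmless because it only preserves terms that are no larger in the Schreyer order than $\LM(v)$.
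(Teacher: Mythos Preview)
Your proof is correct and follows essentially the same approach as the paper's. Both arguments extend $\deg_\omega$ to $R^m$ via $\deg_\omega(x^\alpha e_i) = \deg_\omega(x^\alpha \LM(f_i))$, use the compatibility of $\omega$ with $A$ to identify the $\omega$-top terms of $v$ with those terms satisfying $x^\alpha \LM(f_i) = \overline{\LM(v)}$, and conclude that $\LM(v)$ survives as the Schreyer-maximal term; your write-up makes the tie-breaking case (equal $x^\alpha \LM(f_i)$, smaller index) more explicit than the paper does, but the underlying argument is the same.
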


\begin{proof}
Assume that $v = \sum_{\alpha,i} c_{\alpha,i} x^{\alpha} e_i \in V_F$. By assumption, for any pair $(\alpha, i), (\beta, j)$ with $c_{\alpha,i}, c_{\beta,j} \neq 0$, $x^{\alpha} \LM(f_i) \prec x^{\beta} \LM(f_j)$ if and only if $\deg_{\omega} x^{\alpha} e_i < \deg_{\omega} x^{\beta} e_j$. Put $x^{\xi} = \LM\left(\overline{\LM(v)}\right)$. Then we have $\deg_{\omega} v = \deg_{\omega} \LM(v) = \deg_{\omega} x^{\xi}$, and for any term $x^{\alpha} e_i$ of $v$, $x^{\alpha} \LM(f_i) = x^{\xi}$ if and only if $\deg_{\omega} x^{\alpha} e_i = \deg_{\omega} v$. Therefore we have $\LM\left(\LW_{\omega}(v)\right) = \LM(v)$.
\end{proof}

\begin{lemma}\label{lem.LSF_subset_LImS}
Set the same assumption of Lemma \ref{lem.LM_LW_Syz}. We have
\[ \langle \LW_{\omega}(u) \mid u \in \Syz(F) \rangle = \im \varphi \]  
In particular, it holds that $\LS{} \subset \LImS{}$.
\end{lemma}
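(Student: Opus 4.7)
The plan is to establish the displayed equality by setting up an explicit homogenization/dehomogenization correspondence between $\Syz(F)$ and $\Syz(F^{(t)}_{\omega})$; the conclusion $\LS{} \subset \LImS{}$ then follows by combining this correspondence with Lemma \ref{lem.LM_LW_Syz}. Throughout I grade $R[t]$ by $\deg_\omega t = 1$ and $\deg_\omega x_j = \omega_j$, and make $R[t]^m$ into a graded module by declaring $\deg_\omega e_i^{(t)} = \deg_\omega f_i$, so that the presentation $R[t]^m \to \langle F^{(t)}_{\omega} \rangle,\ e_i^{(t)} \mapsto f_i^{(t)}$ is graded and $\Syz(F^{(t)}_{\omega})$ is a graded submodule of $R[t]^m$. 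In particular it suffices to argue on $\omega$-homogeneous elements.

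For the inclusion $\langle \LW_\omega(u) \mid u \in \Syz(F)\rangle \subseteq \im \varphi$, take $u = \sum_i h_i e_i \in \Syz(F)$, set $d := \deg_\omega u$, and define
\[ \tilde u := \sum_i t^{d - \deg_\omega f_i}\, h_i^t\, e_i^{(t)}. \]
The maximality of $d$ gives $d - \deg_\omega f_i - \omega \cdot \alpha \geq 0$ for every term $c_{i,\alpha} x^\alpha$ of $h_i$, so $\tilde u$ already lies in $R[t]^m$. Using $f_i^{(t)} = t^{\deg_\omega f_i} f_i^t$ and $h_i^t f_i^t = (h_i f_i)^t$, a direct computation yields
\[ \sum_i t^{d - \deg_\omega f_i} h_i^t f_i^{(t)} = t^d \Bigl(\sum_i h_i f_i \Bigr)^t = 0, \]
hence $\tilde u \in \Syz(F^{(t)}_{\omega})$. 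Setting $t = 0$ kills every term except those with $\omega \cdot \alpha + \deg_\omega f_i = d$, which are by definition the terms of $\LW_\omega(u)$, so $\varphi(\tilde u \otimes 1) = \LW_\omega(u) \in \im \varphi$.

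For the reverse inclusion, take an $\omega$-homogeneous $\tilde v = \sum_i g_i\, e_i^{(t)} \in \Syz(F^{(t)}_{\omega})$ of $\omega$-degree $d$, so each $g_i$ is homogeneous of $\omega$-degree $d - \deg_\omega f_i$. Specializing at $t = 1$ produces $u := \sum_i g_i|_{t=1}\, e_i$, and since $f_i^{(t)}|_{t=1} = f_i$, the relation $\sum_i g_i f_i^{(t)} = 0$ restricts to $\sum_i (g_i|_{t=1}) f_i = 0$, so $u \in \Syz(F)$. Homogeneity then says that the terms surviving $t = 0$ (rather than $t = 1$) are exactly those with $\omega \cdot \alpha + \deg_\omega f_i = d$, which gives $\varphi(\tilde v \otimes 1) = \LW_\omega(u)$ when $\deg_\omega u = d$, and $\varphi(\tilde v \otimes 1) = 0$ otherwise. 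Either way $\varphi(\tilde v \otimes 1)$ lies in $\langle \LW_\omega(u') \mid u' \in \Syz(F)\rangle$.

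For the final inclusion $\LS{} \subset \LImS{}$, Lemma \ref{lem.LM_LW_Syz} gives $\LM(\LW_\omega(v)) = \LM(v)$ for each $v$ in the Gröbner basis $V_F$ of $\Syz(F)$, and the first inclusion already places $\LW_\omega(v)$ in $\im \varphi$, so $\LM(v) \in \LImS{}$. Any monomial in $\LS{}$ is of the form $x^\gamma \LM(v)$ for some $v \in V_F$ and some $\gamma$, and since $\im \varphi$ is an $R$-submodule of $R^m$, the element $x^\gamma \LW_\omega(v)$ also lies in $\im \varphi$, with leading monomial $x^\gamma \LM(v)$. The main piece of bookkeeping is the degree inequality that keeps $\tilde u$ in the polynomial ring (rather than the Laurent ring) and matches the $t = 0$ fibre with the top terms; everything else is a direct computation once the grading conventions are fixed.
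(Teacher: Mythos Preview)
Your proof is correct and follows essentially the same approach as the paper: both directions are obtained by the homogenization/dehomogenization correspondence between $\Syz(F)$ and $\Syz(F^{(t)}_{\omega})$ (specializing at $t=1$ one way, multiplying by the appropriate power of $t$ the other), and the final inclusion $\LS{}\subset\LImS{}$ is deduced from Lemma~\ref{lem.LM_LW_Syz} exactly as in the paper. The only differences are cosmetic: you set up the grading on $R[t]^m$ explicitly and use the $h_i^t$ notation, whereas the paper expands everything monomial by monomial, and you prove the two inclusions in the opposite order.
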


\begin{proof}

If the set equation holds, then it implies that $\LS{} \subset \LImS{}$ since $\LM(v) = \LM(\LW_{\omega}(v))$ for any element $v \in V_F$ (Lemma \ref{lem.LM_LW_Syz}).

For any $v \in \Syz(F^{(t)}_{\omega})$, let us compute $\varphi(v \otimes 1)$. Denote
\[ v = \sum_{D} v_D = \sum_{D} \sum_{D = \deg_{\omega}(x^{\alpha}f_i) + d} c_{\alpha, i, d} t^d x^{\alpha} e_i^{(t)}.\]
Since $f_i^{(t)}$ is a homogeneous element in $R[t]$, we have $v_D \in \Syz(F^{(t)}_{\omega})$. Then it holds that
\[ \varphi(v \otimes 1) = \sum_{D} \varphi(v_D \otimes 1) = \sum_{D} \sum_{D = \deg_{\omega}(x^{\alpha}f_i)} c_{\alpha, i, 0} x^{\alpha} e_i.\]
Put
\[ u_D = \sum_{D = \deg_{\omega}(x^{\alpha}f_i) + d} c_{\alpha, i, d} x^{\alpha} e_i. \]
Then we have $ \varphi(v_D \otimes 1)=\LW_{\omega} (u_D) $ if $\varphi(v_D \otimes 1) \neq 0$. Since $v_D \in \Syz(F_{\omega}^{(t)})$ and $\left(f_i^{(t)}\right)_{|t=1} = f_i$, this element $u_D$ is a syzygy of $F$. Therefore we have $ \im \varphi \subset \langle \LW_{\omega}(u) \mid u \in \Syz(F) \rangle$.

Conversely, Let $u = \sum_{\alpha,i} c_{\alpha,i} x^{\alpha} e_i$ be a non-zero syzygy of $F$ and $D_0 = \deg_{\omega} u$. Taking homogenization of the equation
\[ \sum_{\alpha,i} c_{\alpha,i} x^{\alpha}f_i = 0,\]
we get the following syzygy of $F_{\omega}^{(t)}$:
\[ v = \sum_{\alpha,i} c_{\alpha,i}t^{D_0 - \deg_{\omega} (x^{\alpha}f_i)} x^{\alpha}e_i^{(t)}. \]
Therefore we have $\varphi(v \otimes 1) = \sum_{D_0 = \deg_{\omega}(x^{\alpha}f_i)} c_{\alpha,i} x^{\alpha} e_i = \LW_{\omega}(u)$.
\end{proof}

From Lemma \ref{lem.LSF_subset_LImS}, the module of \gr ness obstructions $\Gobs{}$ is divided into the direct sum $\Gobs{} = M(F) \oplus N(F)$, where
\[ M(F) = \langle \LImS{} \rangle/\langle \LS{} \rangle,\ N(F) = \langle \LSL{} \rangle /\langle \LImS{} \rangle.\]
Our results in this paper are represented by these summands as follows.
\begin{itemize}
	\item (Flatness) The family $\spec R[t]/\langle F^{(t)}_{\omega} \rangle$ is flat over $\mathbb{A}^1_{K,t}$ if and only if $N(F) = 0$.
	\item (\gr ness) A tuple $F$ is a \gr\ basis if and only if $M(F) = N(F) = 0$
\end{itemize}

\begin{acknowledgments}
The author gratefully acknowledges the support of past and present members of my institution Mitsubishi Electric Corporation, Information Technology R\&D Center. The author wishes to thank Kazuhiro Yokoyama and Yuki Ishihara for comments on an earlier version of this paper.
\end{acknowledgments}

\bibliographystyle{alpha}
\bibliography{refloc}

\section*{Appendix}\label{sec.append}
Let us see more detail of Example \ref{ex.not_decrease}. The following are explicit choice of polynomials and structures of $\Gobs[\mathit{F}_{\mathit{i}}]$. We compute a \gr\ basis of the syzygy modules $\Syz(F_i)$ independent from Algorithm \ref{alg:minimum_SBA} to determine the structure of $\Gobs[\mathit{F}_{\mathit{i}}]$. The notation $S_F(f)$ denotes the signature of $f$ with respect to $F$.\\[2ex]

\noindent\underline{Input}
\[ \begin{aligned}
 f_1 &= x^3 y - z,\\
 f_2 &= xyz - 2y,\\
 f_3 &= xy^2 - z^2.
 \end{aligned}\]

\noindent\underline{Get $f_4$}
\begin{itemize}
	\item (Tuple) $F_3 = (f_1,f_2,f_3)$,
	\item (\gr\ obstructions)
	\[\Gobs[\mathit{F}_{3}] = \langle x^2 e_2, x^2 e_3, z e_3 \rangle/\langle x^2z^2e_2, x^3ze_2,x^3ye_2,x^3ye_3,xyze_3 \rangle,\]
	\item (Minimal free resolution)
	\[0 \leftarrow \Gobs[\mathit{F}_{3}] \leftarrow R^3 \leftarrow R^6 \leftarrow R^3 \leftarrow 0,\]
	\item (Choice of polynomial with minimum signature) $f_4 = z^3-2y^2$ as the remainder of a division of $\Spoly(ye_2,ze_3)$ and $S_{F_3}(f_4) = ze_3$.
\end{itemize}

\noindent\underline{Get $f_5$}
\begin{itemize}
	\item (Tuple) $F_4 = (f_1,f_2,f_3,f_4)$,
	\item (\gr\ obstructions)
	\[\Gobs[\mathit{F}_{4}] = \langle x^2 e_2, x^2 e_3 \rangle/\langle x^2z^2e_2, x^3ze_2,x^3ye_2,ze_3,x^3ye_3,xye_4\rangle,\]
	\item (Minimal free resolution)
	\[0 \leftarrow \Gobs[\mathit{F}_{4}] \leftarrow R^2 \leftarrow R^5 \leftarrow R^3 \leftarrow 0,\]
	\item (Choice of polynomial with minimum signature) $f_5 = x^2y-\frac{1}{2}z^2$ as the remainder of a division of $\Spoly(ze_1,x^2e_2)$ and  $S_{F_4}(f_5) = x^2e_2$.
\end{itemize}

\noindent\underline{Get $f_6$}
\begin{itemize}
	\item (Tuple) $F_5 = (f_1,f_2,f_3,f_4,f_5)$,
	\item (\gr\ obstructions)
	\[\Gobs[\mathit{F}_{5}] = \langle x^2 e_3, xe_5,ye_5,ze_5 \rangle/\langle x^2e_2, ze_3,x^2ye_3,xye_4,xze_5,xye_5,z^3e_5 \rangle,\]
	\item (Minimal free resolution)
	\[0 \leftarrow \Gobs[\mathit{F}_{5}] \leftarrow R^4 \leftarrow R^8 \leftarrow R^4 \leftarrow 0,\]
	\item (Choice of polynomial with minimum signature) $f_6 = xy-\frac{1}{2}y^2$ as the remainder of a division of $\Spoly(xe_2,ze_5)$ and $S_{F_5}(f_6) = ze_5$.
\end{itemize}

\noindent\underline{Get $f_7$}
\begin{itemize}
	\item (Tuple) $F_6 = (f_1,f_2,f_3,f_4,f_5,f_6)$,
	\item (\gr\ obstructions)
	\[ \Gobs[\mathit{F}_{6}] = \langle x^2 e_3, xe_5,ye_5,ze_6,ye_6 \rangle/\left\langle\begin{aligned}  &x^2e_2, ze_3,x^2ye_3,xye_4,\\
	&ze_5,y^2e_5,xye_5,xe_6,z^3e_6 \end{aligned} \right\rangle ,\]
	\item (Minimal free resolution)
	\[0 \leftarrow \Gobs[\mathit{F}_{6}] \leftarrow R^5 \leftarrow R^{11} \leftarrow R^7 \leftarrow R^1 \leftarrow 0,\]
	\item (Choice of polynomial with minimum signature) $f_7 = y^2z-4y$ as the remainder of a division of $\Spoly(e_2,ze_6)$ and $S_{F_6}(f_7) = ze_6$.
\end{itemize}

\noindent\underline{Get $f_8$}
\begin{itemize}
	\item (Tuple) $F_7 = (f_1,f_2,\ldots,f_7)$,
	\item (\gr\ obstructions)
	\[ \Gobs[\mathit{F}_{7}] = \langle x^2 e_3, xe_5,ye_5,ye_6 \rangle/\left\langle\begin{aligned}  &x^2e_2, ze_3,x^2ye_3,xye_4,y^2e_5,\\
	&xye_5,ze_5,ze_6,xe_6,xe_7,z^2e_7 \end{aligned} \right\rangle ,\]
	\item (Minimal free resolution)
	\[0 \leftarrow \Gobs[\mathit{F}_{7}] \leftarrow R^4 \leftarrow R^9 \leftarrow R^6 \leftarrow R^1 \leftarrow 0,\]
	\item (Choice of polynomial with minimum signature) $f_8 = y^3 -2z^2$ as the remainder of a division of $\Spoly(e_3,ye_6)$ and $S_{F_7}(f_8) = ye_6$.
\end{itemize}

\noindent\underline{Get $f_9$}
\begin{itemize}
	\item (Tuple) $F_8 = (f_1,f_2,\ldots,f_8)$,
	\item (\gr\ obstructions)
	\[ \Gobs[\mathit{F}_{8}] = \langle xe_5,xe_8 \rangle/\left\langle\begin{aligned}  &x^2e_2, ze_3,x^2e_3,xye_4,ze_5,ye_5,\\
	&ze_6,ye_6,xe_6,xe_7,z^2e_7,ze_8,xye_8 \end{aligned} \right\rangle ,\]
	\item (Minimal free resolution)
	\[0 \leftarrow \Gobs[\mathit{F}_{8}] \leftarrow R^2 \leftarrow R^4 \leftarrow R^2 \leftarrow 0,\]
	\item (Choice of polynomial with minimum signature) $f_9 =xz^2 - \frac{1}{2} yz^2$ as the remainder of a division of $\Spoly(y^2e_6,xe_8)$ and $S_{F_8}(f_9) = xe_8$.
\end{itemize}

\noindent\underline{Get $f_{10}$}
\begin{itemize}
	\item (Tuple) $F_9 = (f_1,f_2,\ldots,f_9)$,
	\item (\gr\ obstructions)
	\[ \Gobs[\mathit{F}_{9}] = \langle xe_5 \rangle/\left\langle\begin{aligned}  &x^2e_2, ze_3,x^2e_3,xye_4,ze_5,ye_5,ze_6,\\
	&ye_6,xe_6,xe_7,z^2e_7,ze_8,xe_8,ye_9,ze_9 \end{aligned} \right\rangle ,\]
	\item (Minimal free resolution)
	\[0 \leftarrow \Gobs[\mathit{F}_{9}] \leftarrow R^1 \leftarrow R^2 \leftarrow R^1 \leftarrow 0,\]
	\item (Choice of polynomial with minimum signature) $f_{10} =yz^2 -4z$ as the remainder of a division of $\Spoly(e_1,xe_5)$ and $S_{F_9}(f_{10}) = xe_5$.
\end{itemize}

\noindent\underline{Get $f_{11}$}
\begin{itemize}
	\item (Tuple) $F_{10} = (f_1,f_2,\ldots,f_{10})$,
	\item (\gr\ obstructions)
	\[ \Gobs[\mathit{F}_{10}] = \langle xe_{10} \rangle/\left\langle\begin{aligned}  &x^2e_2, ze_3,x^2e_3,xye_4,xe_5,ze_5,ye_5,ze_6,\\
	&ye_6,xe_6,xe_7,z^2e_7,ze_8,xe_8,ye_9,ze_9,ze_{10},ye_{10} \end{aligned} \right\rangle ,\]
	\item (Minimal free resolution)
	\[0 \leftarrow \Gobs[\mathit{F}_{10}] \leftarrow R^1 \leftarrow R^2 \leftarrow R^1 \leftarrow 0,\]
	\item (Choice of polynomial with minimum signature) $f_{11} =xz-\frac{1}{2}yz$ as the remainder of a division of $\Spoly(ye_9,xe_{10})$ and $S_{F_{10}}(f_{11}) = xe_{10}$.
\end{itemize}
\if{
\noindent\underline{Return}\\

The following set of 11 polynomials is a \gr\ basis of $I = \langle f_1,f_2,f_3 \rangle$ since $\Gobs[\mathit{F}_{11}] = 0$:
\[ \begin{aligned}
 f_1 &= x^3 y - z,\\
 f_2 &= xyz - 2y,\\
 f_3 &= xy^2 - z^2,\\
 f_4 & = z^3 - 2y^2,\\
 f_5 & = x^2y - \frac{1}{2}z^2,\\
 f_6 & = xy - \frac{1}{2}y^2,\\
 f_7 & = y^2z - 4y,\\
 f_8 & = y^3 - 2z^2,\\
 f_9 & = xz^2 - \frac{1}{2}yz^2,\\
 f_{10} & = yz^2 - 4z,\\
 f_{11} & = xz - \frac{1}{2}yz.
 \end{aligned}\]
}\fi

\end{document}